\documentclass[12pt, reqno]{amsart}
\usepackage{hyperref}

\usepackage{enumerate}
\usepackage{amssymb,amsmath,graphicx,amsthm, fullpage}
\usepackage{xcolor}
\usepackage{soul}

\usepackage{color}

\usepackage[utf8]{inputenc}
\usepackage[T1]{fontenc}
\usepackage[english]{babel}
\usepackage{amsmath, amssymb, amsthm, amsfonts}
\usepackage{geometry}
\usepackage{hyperref}
\hypersetup{colorlinks=true, linkcolor=blue, citecolor=red}

\newtheorem{theorem}{Theorem}[section]
\newtheorem{lemma}[theorem]{Lemma}

\newtheorem{corollary}[theorem]{Corollary}

\theoremstyle{definition}

\newtheorem{remark}[theorem]{Remark}

\newcommand{\C}{\mathbb{C}}
\newcommand{\D}{\mathbb{D}}
\newcommand{\R}{\mathbb{R}}
\newcommand{\Z}{\mathbb{Z}}
\newcommand{\dbar}{\bar{\partial}}
\newcommand{\di}{\partial}

\newcommand{\wbar}{\bar{w}}
\newcommand{\zbar}{\bar{z}}

\newcommand{\w}{\bar{w}}
\newcommand{\z}{\bar{z}}
\def\H{\mathbb H}
\newcommand{\Lnorm}[2]{L_{\vphantom{x}#1}^{\vphantom{p'}#2}}

\def\z{{\bar z}}
\def\w{{\bar w}}
\def\di{\partial}

\def\bw{\bar w}

\begin{document}

\title[$L^p$ Estimates for $\dbar$ on Rational Hartogs Triangles]{$L^p$ Estimates for the $\dbar$-Problem on Rational Hartogs Triangles}

\author{Tran Vu Khanh}
\address{Department of Mathematics, International University, Vietnam National University Ho Chi Minh City, Vietnam}
\email{tvkhanh@hcmiu.edu.vn}

\author{Tu Nguyen$^*$} \thanks{$^*$Corresponding author.}
\address{Department of Mathematics, International University, Vietnam National University Ho Chi Minh City, Vietnam}
\email{natu@hcmiu.edu.vn}

\thanks{This research is funded by Vietnam National Foundation for Science and Technology
	Development (NAFOSTED) under grant number IZVSZ2 229554.}

\subjclass[2010]{Primary 32A25; Secondary 32A36.}
\keywords{$\dbar$-problem, canonical solution, Bergman projection, Hartogs triangle, $L^p$ regularity.}

\maketitle
\begin{abstract}
We investigate $L^p$ estimates for the $\bar{\partial}$-problem on rational Hartogs triangles $\H_{m/n} = \{ (z_1, z_2) \in \mathbb{C}^2 : |z_1|^m < |z_2|^n < 1 \}$. For  $p \in (1, \infty)$, we establish the existence of a solution operator that is bounded on $L^p(\H_{m/n})$. Our approach avoid the need for any {\it a priori} condition on the data.
	
We also show that the canonical solution $K_{\H_{m/n}}$ is bounded on $L^p(\H_{m/n})$ for $p \in (p_0, p_2)$, where $p_0=\frac{2m+2n}{m+n+1+\min\{m, n\}}$ and $p_2=\frac{2m+2n}{m+n-1}$. For classical Hartogs triangle, $\H_1$, this establishes boundedness for $p \in (1, 4)$. 
\end{abstract}

\section{Introduction}

For $\gamma > 0$, the Hartogs triangle of exponent $\gamma$ is defined as the pseudoconvex domain
\begin{equation} \label{eq:hartogs_def}
	\H_{\gamma} = \left\{ (z_1, z_2) \in \mathbb{C}^2 : |z_1|^\gamma < |z_2| < 1 \right\}.
\end{equation}
We distinguish between \textit{rational} Hartogs triangles, where $\gamma = m/n \in \mathbb{Q}^+$ with $\gcd(m,n)=1$, and \textit{irrational} ones. 

The classical Hartogs triangle, $\H_1$, has long served as an important source of counterexamples in several complex variables. The primary challenge in the study of various operators on $\H_{\gamma} $ is that its boundary fails to be a graph at the origin (see \cite{Sh15, Sh23, Sh24, BFLS22, MaMi92, ChCh91, Sib75}).

The mapping properties of the Bergman projection $B_{\H_\gamma}$ have been a subject of intense investigation in recent years. Unlike for smoothly bounded or Lipschitz pseudoconvex domains, where $L^p$ boundedness of Bergman projection typically holds for the full range $1 < p < \infty$, for  Hartogs triangles it only holds for a restricted range of $p$ depending on $\gamma$. As established in \cite{EdMc16, EdMc17, ChZe16, ChEdMc19}, $B_{\H_{m/n}}$ is bounded on $L^p(\H_{m/n})$ if and only if $p \in (p_1, p_2)$, where 
\begin{equation} \label{eq:p1p2_def}
	p_1 = \frac{2m+2n}{m+n+1} \quad \text{and} \quad p_2 = \frac{2m+2n}{m+n-1}.
\end{equation}
That weak-type estimate holds at $p_2$ but not at $p_1$ was showed in \cite{HuWi20, ChAdKo23}. For irrational $\gamma$,  $B_{\H_{\gamma}}$ is bounded only for the trivial case $p=2$ \cite{Edh16}. 
Similar phenomena have been observed for weighted Bergman projections, Toeplitz operators, and the Berezin transform (see \cite{BTRWZ22, KhLiTh17, We25}).

The $L^p$ theory for the Cauchy--Riemann equation $\bar{\partial} u = f$ on $\H_1$ is similarly delicate. We briefly summarize some important results. 
\begin{itemize}
	\item Chaumat and Chollet \cite{ChCh91} and Ma and Michel \cite{MaMi92} established that the $\bar{\partial}$-equation admits bounded solution operators in H\"older spaces $C^{k,\alpha}(\bar{\H}_1)$ for any $k \in \mathbb{N}$ and $\alpha \in (0,1)$. They also contruct $\bar{\partial}$-closed forms in $C^\infty(\bar{\H}_1)$ that admit no solution smooth up to the boundary, a phenomenon rooted in the fact that $\bar{\H}_1$ lacks a basis of Stein neighborhoods \cite{Sib75}.
	
	\item Chakrabarti and Shaw \cite{ChSh13} addressed the failure of global regularity by establishing weighted Sobolev estimates, with weights of the form $|z_2|^{2\ell}$.
	
	\item In the $L^p$ setting, Chen and McNeal \cite{ChMc20} constructed an explicit integral solution operator on $\H_1$ and proved its boundedness on $L^p(\H)$ for $p\in(1,\infty)$. However, they required that the data $f = f_1d\zbar_1+ f_2d\zbar_2  $ satisfies $\zbar_1f_1+\zbar_2 f_2 =0$. (This is equivalent to requiring $g_2=0$ in \eqref{pullback_components}.)

	\item Recently, Zhang \cite{Zha24} constructed a bounded solution operator without any extra condition on $f$, but only for $p \in [4, \infty)$. This result was further extended to Sobolev spaces $W^{k,p}$ by Pan and Zhang \cite{PaZh25} for the same range of $p$. The restriction $p \ge 4$ appears to be a limit of their approach, rather than an intrinsic barrier to the problem.
\end{itemize}

The first objective of this paper is to overcome the restriction $p\ge 4$. We prove $L^p$ estimates for the $\bar{\partial}$-problem on all rational Hartogs triangles and for the {\it full range} $1 < p < \infty$, without imposing any \textit{a priori} vanishing conditions on the data.

\begin{theorem} \label{thm:main_sol_op}
	 For any $1 < p < \infty$, there exists a solution operator $T_p $ for the $\bar{\partial}$-problem on $\H_{m/n}$ such that 
	\[ T_p : L^p_{(0,1)}(\H_{m/n}) \to L^p(\H_{m/n}) \]
	is bounded.
\end{theorem}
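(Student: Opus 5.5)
We plan to pull the problem back to a product domain, solve there with one-variable Cauchy transforms, and push the solution forward. Consider the branched covering $\Phi(w_1,w_2)=(w_1w_2^m,\, w_2^n)$, which maps $\D\times\D^*$ onto $\H_{m/n}$: indeed $|z_1|^m=|w_1|^m|w_2|^{mn}<|w_2|^{mn}=|z_2|^n<1$. The map $\Phi$ is $n$-to-$1$, with deck transformations $(w_1,w_2)\mapsto(\omega^{-m}w_1,\omega w_2)$ for $\omega^n=1$. Given $f\in L^p_{(0,1)}(\H_{m/n})$, we pull back $\Phi^*f=g_1\,d\bar w_1+g_2\,d\bar w_2$; these $g_j$ are the components referred to in \eqref{pullback_components}. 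The form $\Phi^*f$ is $\dbar$-closed and invariant under the deck group. Since $|\det \Phi'|^2=n^2|w_2|^{2m+2n-2}$, the $L^p$ norm of $f$ corresponds to a weighted $L^p$ norm on $\D\times\D$ with a power weight $|w_2|^{a}$. Each component of $\Phi^*f$ is controlled by $|w_2|^{b}|f\circ\Phi|$ for some $b\ge 0$, so $g_1,g_2$ lie in weighted spaces $L^p(\D\times\D,|w_2|^{\alpha_p})$ with explicit exponents.

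On the bidisc we use the standard solution operator. Let
\[
u=\mathcal C_1 g_1+\mathcal C_2\big(g_2-\partial_{\bar w_2}\mathcal C_1 g_1\big),
\]
where $\mathcal C_j$ is the Cauchy transform in $w_j$ on $\D$. Because of $\dbar$-closedness, $g_2-\partial_{\bar w_2}\mathcal C_1 g_1=\mathcal C_1(\partial_{\bar w_1} g_2)$ is holomorphic in $w_1$. The key freedom is that we may replace $\mathcal C_2$ by a \emph{modified} Cauchy transform
\[
\mathcal C_2^{(k)}h(w)=\frac{1}{2\pi i}\int_{\D}\frac{h(\zeta)}{\zeta-w}\Big(\frac{w}{\zeta}\Big)^{k}d\bar\zeta\wedge d\zeta,
\]
which still inverts $\partial_{\bar w}$. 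Choosing the integer $k=k(p)$ appropriately makes $\mathcal C_2^{(k)}$ bounded on $L^p(\D,|w|^{\alpha})$ for the exponent $\alpha$ dictated by $p$. This is a classical Muckenhoupt-type computation for power weights: the kernel $|w/\zeta|^k/|\zeta-w|$ is a Schur-test operator on power-weighted spaces as long as $k$ pushes the exponent into the admissible window. Near $w_2=0$ this is exactly what fails for the unmodified operator when $p<4$. The $\mathcal C_1$ parts act only in $w_1$ and commute with multiplication by weights in $w_2$, so they are bounded for every $p$.

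The resulting $u$ must descend to $\H_{m/n}$. Averaging $u$ over the deck group preserves $\dbar u=\Phi^*f$, because $\Phi^*f$ is invariant. The average is therefore a function $v$ with $v\circ\Phi$ equal to the averaged $u$ and $\dbar v=f$, and we set $T_pf=v$. Its $L^p$ norm on $\H_{m/n}$ equals the weighted norm of the averaged $u$ with weight $|w_2|^{2m+2n-2}$. It remains to control the weighted norm of $u$ by that of $g_1,g_2$, where the weights differ by the factor $|w_2|^{b}$. This is handled because $\mathcal C_2^{(k)}$ gains: its output vanishes to order $k$ at $w_2=0$, so it can absorb a power of $|w_2|$.

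The main obstacle is the term $\partial_{\bar w_2}\mathcal C_1 g_1$, which costs a derivative. We avoid differentiating by using the identity $\mathcal C_1(\partial_{\bar w_1}g_2)$ above and then integrating by parts in $w_1$. This turns the composite $\mathcal C_2^{(k)}\mathcal C_1\partial_{\bar w_1}$ into an operator with a product kernel of Calderón–Zygmund type in $w_1$, essentially a Beurling-type transform, times the modified Cauchy kernel in $w_2$. We then verify weighted $L^p$ bounds for this product operator via iterated one-variable estimates, applying Fubini and the power-weight bounds in each variable.
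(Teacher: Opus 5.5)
Your architecture matches the paper's: pull back by a branched cover of $\D\times\D^*$, solve on the bidisc with a weight-shifted operator, average over the deck group and push forward. Your $\mathcal C_2^{(k)}$ is just $w_2^{k}\,\mathcal C_2\,w_2^{-k}$, i.e.\ the paper's conjugation $w_2^{-l}K_D(w_2^{l}\,\cdot\,)$ with $l=-k$. (A minor slip: $(w_1w_2^m,w_2^n)$ maps onto $\H_{n/m}$, since $|z_1|^m=|w_1|^m|w_2|^{m^2}$ and $|z_2|^n=|w_2|^{n^2}$. The correct map is $(w_1w_2^n,w_2^m)$, which is $m$-to-$1$; this is only a relabeling.)

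The genuine gap is the bidisc solution operator, at exactly the step you flag as the main obstacle. Closedness gives $\partial_{\bar w_2}\mathcal C_1g_1=\mathcal C_1(\partial_{\bar w_1}g_2)$. It does not give $g_2-\partial_{\bar w_2}\mathcal C_1g_1=\mathcal C_1(\partial_{\bar w_1}g_2)$, whose right side is not even holomorphic in $w_1$. Applying \eqref{CD} in $w_1$ gives instead
\[
g_2-\partial_{\bar w_2}\mathcal C_1g_1=g_2-\mathcal C_1\partial_{\bar w_1}g_2=C_{b\D}\,g_2 \quad\text{(boundary Cauchy integral in the variable } w_1\text{)}.
\]
Integration by parts cannot turn this into a Beurling-type operator. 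Indeed $\partial_{\bar\zeta}(\zeta-w)^{-1}$ is a point mass (the kernel $(\zeta-w)^{-2}$ comes from $\partial_\zeta$), so you recover the identity plus exactly this boundary term. Your $u=\mathcal C_1g_1+\mathcal C_2^{(k)}C_{b\D}g_2$ therefore requires traces of $g_2$ on $\{|w_1|=1\}$, which general $L^p$ data do not have. This is why operators of this shape need extra hypotheses such as $g_2=0$.

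The missing idea is to replace the boundary Cauchy projection in $w_1$ by the Bergman projection. With the canonical solution $S_1=(I-B_1)\mathcal C_1$ one has $S_1\partial_{\bar w_1}=I-B_1$. Hence $u=S_1g_1+B_1\mathcal C_2^{(k)}g_2$ satisfies $\partial_{\bar w_1}u=g_1$ and $\partial_{\bar w_2}u=(I-B_1)g_2+B_1g_2=g_2$ for closed data. Each term is bounded on the relevant weighted spaces, because $B_1$ and $S_1$ are bounded on $L^p(\D)$ for every $1<p<\infty$, combined via Fubini with your weighted bound for $\mathcal C_2^{(k)}$ (this is Theorem \ref{thm:canonical_bidisk}).

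You also leave open why the solution identity holds across $\{w_2=0\}$. The paper first shows that the shifted form $w_2^{-k}g$ stays $\dbar$-closed on all of $\D\times\D$ (Lemma \ref{extension}; its hypothesis holds because $w_2^{l}g_1\in L^p(|w_2|^{\beta})$ with $\beta<p-2$). It then argues by density of smooth closed forms.
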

In fact, we will prove a weighted estimate that contains this result as a special case, see Theorem \ref{thm:main_weighted}. Our approach employs a branched covering map $\psi: \D \times \D^* \to \H_{m/n}$ to reduce the problem to a weighted estimate for $\bar{\partial}$-problem on $\D \times \D^*$. After solving the $\bar{\partial}$-problem on $\D \times \D^* $, we apply an averaging process to transfer the solution back to  $\H_{m/n}$.

In the second part of this paper, we investigate the $L^p$ mapping properties of the \textit{canonical solution} to the $\dbar$ problem on $\H_{m/n}$. This is the $L^2$-minimal solution, which by  Kohn's formula is given by
\begin{equation} \label{eq:kohn_rep}
	K_{\H_{m/n}} = (I - B_{\H_{m/n}}) T_2 : L_{(0,1)}^2(\H_{m/n}) \cap \ker(\bar{\partial}) \to L^2(\H_{m/n}).
\end{equation}
For $p<2$, as long as it is defined, $(I - B_{\H_{m/n}}) T_p$ is an extension of $K_{\H_{m/n}} $, since $I - B_{\H_{m/n}}$ annihilates  $A^2(\H_{m/n})$. We will continue to denote this extension $K_{\H_{m/n}} $.
For $p \in (p_1, p_2)$,  the boundedness of $K_{\H_{m/n}}$ on $L^p$ follows directly from Theorem \ref{thm:main_sol_op} and the result on the Bergman projection mentioned above.

The second contribution of this work is to extend this range. By combining our weighted estimates and the results for Bergman--Toeplitz operators from \cite{BTRWZ22}, we obtain the following
\begin{theorem} \label{thm:canonical_rational} Let $K_{\H_{m/n}}$ be the canonical solution operator on the Hartogs triangle $\H_{m/n}$ and
	\begin{equation} \label{p0}
		p_0 = \frac{2m+2n}{m+n+1+\min\{m, n\}}.
\end{equation}
Then 
\begin{itemize}
	\item[(i)] $K_{\H_{m/n}}$ is bounded on $L^p(\H_{m/n})$ if $p \in (p_0, p_2)$,
	\item[(ii)] $K_{\H_{m/n}}$ is not bounded on $L^p(\H_{m/n})$ if $p \ge p_2$.
\end{itemize}
\end{theorem}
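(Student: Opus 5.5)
Part (ii) comes first since it is the simpler half. I argue by duality and testing. Suppose $K_{\H_{m/n}}$ is bounded on $L^p$ for some $p\ge p_2$. For $\dbar$-closed $f$, write $f=\dbar u$ with $u=T_p f$ from Theorem \ref{thm:main_sol_op}. Then $K f=(I-B)u$. Boundedness of $K$ on $L^p$ would therefore give $\|B u\|_p\lesssim \|u\|_p+\|\dbar u\|_p$ for all $u$ in $L^p$ with $\dbar u\in L^p$. In particular this holds for all $u\in C^\infty_c(\H_{m/n})$.

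I test this against compactly supported bumps concentrating near the origin along the singular direction. The key point is that $B$ of a radial-type bump reproduces a multiple of a Bergman kernel column. That column is $\approx$ the monomial $z_1^{\alpha}z_2^{\beta}$ attaining the critical integrability exponent, namely the monomial that lies in $A^{p'}$ but fails to lie in $A^{p}$ when $p\ge p_2$. This is the same mechanism that gives unboundedness of $B_{\H_{m/n}}$ for $p\ge p_2$ in \cite{EdMc16,ChEdMc19}.

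Concretely, take $u=\chi$, a smooth bump supported in a fixed compact subset of $\H_{m/n}$. It is invariant under the rotations $(z_1,z_2)\mapsto(e^{i\theta_1}z_1,e^{i\theta_2}z_2)$ and multiplied by $\bar z_1^{\alpha}\bar z_2^{\beta}$. Then $B u=c\,z_1^{\alpha}z_2^{\beta}/\|z_1^\alpha z_2^\beta\|^2$ with $c\neq0$. Choose $(\alpha,\beta)$ with $\beta<0$ allowed so that this monomial is in $L^2$ but not in $L^p$. Such a choice exists exactly when $p\ge p_2$, by the monomial integrability computation on $\H_{m/n}$, which is given by the Reinhardt integral $\int |z_1|^{2\alpha}|z_2|^{2\beta}$ over $|z_1|^m<|z_2|^n<1$. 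This contradicts the inequality, since $u$ and $\dbar u$ are smooth and compactly supported.

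For part (i) there are two ranges. On $(p_1,p_2)$ the result is immediate from Theorem \ref{thm:main_sol_op} and the boundedness of $B_{\H_{m/n}}$. For $p\in(p_0,p_1]$ I use the weighted version, Theorem \ref{thm:main_weighted}. The plan is to pick a weight $|z_2|^{s}$ with $s>0$ so that the solution operator $T$ maps $L^p$ into the weighted space $L^p(|z_2|^{-s})$, or into some $L^p$ space with a gain of a power of $|z_2|$. Such a gain should be available after pulling back by $\psi$ to $\D\times\D^*$, where the weight can be placed on the $\D^*$ factor. The remaining task is to show that $B_{\H_{m/n}}$ composed with multiplication by $|z_2|^{s}$, which is a Bergman--Toeplitz operator with symbol $|z_2|^s$, is bounded on $L^p$ for $p>p_0$. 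This is where \cite{BTRWZ22} enters: it characterizes the $L^p$-boundedness of such Toeplitz operators on $\H_{m/n}$ in terms of the power of $|z_2|$, and the range of $p$ lowers as $s$ grows. Then $Kf=u-B(|z_2|^{s}\cdot|z_2|^{-s}u)$ is bounded.

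The main obstacle is matching exponents. The admissible $s$ in the weighted $\dbar$ estimate is limited by the geometry of the covering $\psi$. In particular, the $z_1$-direction contributes $\min\{m,n\}$ through the branching degree. The resulting threshold from \cite{BTRWZ22} must come out exactly as $p_0=\frac{2m+2n}{m+n+1+\min\{m,n\}}$. I would compute the largest $s$ allowed by Theorem \ref{thm:main_weighted} for given $p$, insert it into the Toeplitz criterion, and verify that the resulting condition reduces to $p>p_0$.
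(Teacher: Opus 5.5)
Your route matches the paper's. In (i) you compose a solution operator with a gain of a power of $|z_2|$ (Theorem~\ref{thm:main_weighted}) with the Bergman--Toeplitz bound of Theorem~\ref{thm:toeplitz_boundedness}. In (ii) you use a test function whose Bergman projection is the critical Laurent monomial. As written, though, the test function in (ii) is wrong, and the step in (i) that actually produces $p_0$ is never carried out.

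\textbf{Part (ii).} Let $\chi$ be Reinhardt and $u=\chi\,\bar z_1^{\alpha}\bar z_2^{\beta}$. Then $\langle u,z_1^{\gamma_1}z_2^{\gamma_2}\rangle=\int\chi\,\bar z_1^{\alpha+\gamma_1}\bar z_2^{\beta+\gamma_2}\,dV$ vanishes unless $(\gamma_1,\gamma_2)=(-\alpha,-\beta)$. So $Bu$ is a multiple of $z_1^{-\alpha}z_2^{-\beta}$, and is $0$ when $\alpha>0$; it is not a multiple of $z_1^{\alpha}z_2^{\beta}$. On $\H_1$ the critical monomial is $z_2^{-1}$, and your $u=\chi\,\bar z_2^{-1}$ gives $Bu=c\,z_2$, so there is no contradiction.

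The fix is to drop the conjugates. Take $k\ge0$, $l\ge1$ with $n(k+1)+m(1-l)=1$, and set $u=\chi\,z_1^kz_2^{-l}$. This lies in $C_c^\infty(\H_{m/n})$ because $\chi$ vanishes near $z_2=0$. Then $Bu=c\,z_1^kz_2^{-l}$ with
\[
c=\int\chi|z_1|^{2k}|z_2|^{-2l}\,dV\Big/\|z_1^kz_2^{-l}\|_2^2>0.
\]
The Reinhardt integral shows $z_1^kz_2^{-l}\in L^p$ if and only if $p(1-m-n)+2(m+n)>0$, i.e.\ if and only if $p<p_2$.

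The detour through $T_p$ and the contradiction argument are unnecessary. For any $L^2$ solution $u$ of $\dbar u=f$ one has $K_{\H_{m/n}}f=u-Bu$. Hence $K_{\H_{m/n}}$ maps the smooth compactly supported form $\dbar u$ outside $L^p$ for every $p\ge p_2$. With this correction your example is a compactly supported version of the paper's $f=\dbar(z_1^k\bar z_2^l)$, and no concentration at the origin is involved.

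\textbf{Part (i).} The exponent matching you postpone is the entire content below $p_1$, and it is short.
\begin{enumerate}
\item Theorem~\ref{thm:main_weighted} with unweighted data ($\alpha=0$ there) gives $T_p:L^p\to L^p(\H_{m/n},|z_2|^{\alpha'})$ for every $\alpha'>-p\min\{1,n/m\}$.
\item Hence $|z_2|^{-s}T_pf\in L^p$ for every $s<\min\{1,n/m\}$. The relevant weight is $|z_2|^{-sp}$, not $|z_2|^{-s}$ as you wrote.
\item Such $s$ satisfies $0\le s<1\le\frac{m+n-1}{m}$, so Theorem~\ref{thm:toeplitz_boundedness} applies. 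It gives boundedness of $u\mapsto B(|z_2|^s u)$ on $L^p$ for $\frac{2m+2n}{m+n+1+ms}<p<p_2$.
\item As $ms\uparrow\min\{m,n\}$ the left endpoint decreases to $p_0$. So each $p\in(p_0,p_1]$ is covered by some admissible $s$.
\item For that $s$, $K_{\H_{m/n}}f=T_pf-B\bigl(|z_2|^{s}\cdot|z_2|^{-s}T_pf\bigr)$ is bounded.
\end{enumerate}
The bound $s<\min\{1,n/m\}$ is what Theorem~\ref{thm:main_weighted} provides. The paper's text prints $\min\{1,m/n\}$ at this point, which should read $\min\{1,n/m\}$; with $m/n$ one would not reach $p_0$ when $m<n$.
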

For the classical Hartogs triangle $\H_1$, we have $p_0 = 1, p_1 = 4/3$, and $p_2 = 4$. This gives
\begin{corollary}
	The canonical solution $K_{\H_1}$ is bounded on $L^p(\H_1)$ for  $p \in (1, 4)$ and unbounded for $p\ge 4$.
\end{corollary}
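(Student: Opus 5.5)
The corollary is the special case $m=n=1$ of Theorem \ref{thm:canonical_rational}, so the plan is to substitute these values into the exponents and read off the two conclusions. Since $\gcd(1,1)=1$, $\H_{1/1}=\H_1$ is an admissible rational Hartogs triangle, and the theorem applies.

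\textbf{Computing the exponents.} With $m=n=1$ we have $\min\{m,n\}=1$. Formula \eqref{p0} gives
\[
p_0=\frac{2+2}{1+1+1+1}=1 .
\]
Formula \eqref{eq:p1p2_def} gives
\[
p_2=\frac{2+2}{1+1-1}=4 .
\]
For comparison, $p_1=\frac{4}{3}$, which confirms that part (i) of the theorem strictly enlarges the range obtained from the Bergman projection alone.

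\textbf{Boundedness for $1<p<4$.} Part (i) of Theorem \ref{thm:canonical_rational} states that $K_{\H_1}$ is bounded on $L^p(\H_1)$ for $p\in(p_0,p_2)=(1,4)$. This is exactly the first assertion of the corollary.

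\textbf{Unboundedness for $p\ge 4$.} Part (ii) of Theorem \ref{thm:canonical_rational} states that $K_{\H_1}$ is not bounded on $L^p(\H_1)$ for $p\ge p_2=4$. This is the second assertion.

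All the analytic content is carried by Theorem \ref{thm:canonical_rational}, so no step here is a real obstacle. The only point needing care is that the exponents are evaluated correctly; in particular $p_0=1$, so the bounded range covers every $p$ in $(1,4)$.
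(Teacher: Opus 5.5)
Your proposal is correct and is exactly the paper's argument. The corollary is read off from Theorem \ref{thm:canonical_rational} with $m=n=1$, where $p_0=1$ and $p_2=4$. Your side remark that $p_1=4/3$ matches the paper's comparison with the Bergman-projection range.
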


The remainder of the paper is organized as follows. Section 2 is devoted to the $\bar{\partial}$-problem and weighted $L^p$ estimates on the unit disk $\D$ and the bidisc $\D \times \D$. In Section 3, we prove our key result Theorem \ref{thm:main_weighted} which implies Theorem \ref{thm:main_sol_op} and is the key ingredient in the proof of Theorem \ref{thm:canonical_rational} in Section 4. Finally, the Appendix contains several auxiliary result that used in previous sections, which may be of independent interest.

Throughout this paper, we use $p'=\frac{p}{p-1}$ to denote the dual exponent of $p$. The notation $A \lesssim B$ means $A \le CB$ for some universal positive constant $C$, and $A \sim B$ means $A \le CB$ and $B \le CA$. For $\phi > 0 $ on $D\subset \C^n$,  the weighted space $L^p(D, \phi) $ is defined as
\[ L^p(D, \phi) = \left\{ u : \|u\|_{L^p(D, \phi)}^p=\int_D |u(z)|^p \phi(z) dV(z) < \infty \right\}. \]
A $(0,1)$-form $f = \sum_{j=1}^n f_j d\bar{z}_j$ is said to be in $L^p_{(0,1)}(D, \phi)$ if all its coefficients $f_j$ belong to $L^p(D, \phi)$.

	\section{Estimates for the $\dbar$-problem on $\D$ and $\D\times\D$}	
	In this section, we review the theory of the $\dbar$-problem on the unit disk $\D \subset \C$. Let $dA(w) = \frac{i}{2\pi} dw \wedge d\wbar$ denote the normalized area measure on $\D$. For $u \in C^1(\bar{\D})$, the Cauchy-Pompeiu formula states that
	\begin{equation}\label{Cauchy-formula1}
		u(z) = \frac{1}{2\pi i} \int_{b\D} \frac{u(w)}{w - z} dw - \int_{\D} \frac{\di_{\wbar}u(w)} {w - z}dA(w), \quad z \in \D.
	\end{equation}
	The boundary Cauchy transform $C_{b\D}: L^\infty(b\D) \to A(\D)$ and the solid Cauchy transform $C_{\D}: L^\infty(\D) \to C(\D)$ are defined respectively by 
	\[ C_{ b\D} u(z) = \frac{1}{2\pi i} \int_{b\D} \frac{u(w)}{w - z} dw \quad \text{and} \quad C_\D[f](z) =  -\int_{\D} \frac{f(w)}{w - z}dA(w). \]
	It is well-known that $C_\D$ is a solution operator for the inhomogeneous Cauchy-Riemann equation $\di_{\zbar} u = f$ on $\D$. Thus, \eqref{Cauchy-formula1} can be interpreted as a homotopy formula
	\begin{equation}\label{CD}
		C_{b\D} u + C_{\D}[\di_{\z}u] = u \quad \text{for } u \in  C^1(\bar{\D}).
	\end{equation}
	The Bergman projection $B_\D: L^2(\D) \to A^2(\D)$ is given by
	\[ B_\D u(z) = \int_{\D} \frac{u(w)}{(1 - z\wbar)^2} dA(w). \]
The canonical ($L^2$-minimal) solution operator is given by $$S_\D = (I - B_\D)C_\D,$$
which admits the explicit representation
\begin{align}
	S_\D[f](z)&= \int_{\D} \frac{1 - |w|^2}{(w - z)(1 - z\bar{w})} f(w) dA(w) \label{canonical-kernel}
\end{align}
for  $f\in L^2(\D)$.

 Since $(I - B_\D) C_{b\D} = 0$ on $C^1(\bar{\D})$, the homotopy formula \eqref{CD} yields 
\begin{equation}\label{BD}
	B_\D u + S_\D[\partial_{\bar{z}}u] = u \quad \text{ for } u\in C^1(\bar{\D}).
\end{equation}
The boundedness of the Bergman projection $B_{\D}$ and the canonical solution $S_{\D}$ in weighted $L^p$ spaces are as follows.
\begin{theorem}\label{S-weight} 
	Assume that $1< p< \infty$, $\alpha <2p-2$ and ${\beta}>-2$. Then,
	\begin{itemize}
		\item[(i)] $\|B_{\D}f\|_{L^p(\D, |z|^\beta)} \lesssim \|f\|_{L^p(\D, |z|^{\alpha})}$, 
		\item[(ii)] $\|S_{\D}f\|_{L^p(\D, |z|^\beta)} +\|C_{\D}f\|_{L^p(\D, |z|^\beta)}\lesssim \|f\|_{L^p(\D, |z|^{\alpha})}$ provided $\alpha \le \beta +p$.
	\end{itemize}
\end{theorem}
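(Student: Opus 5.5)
The plan is to reduce both estimates to Schur's test, after splitting the disk into the region near the origin, where the weight $|z|^\beta$ or $|z|^\alpha$ matters, and the region away from it, where the classical unweighted theory applies.

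\textbf{Part (i).} The Bergman kernel $(1-z\wbar)^{-2}$ is smooth and bounded when $|z|<1/2$ or $|w|<1/2$. We split $f=f\chi_{\{|w|<1/2\}}+f\chi_{\{|w|\ge 1/2\}}$ and the output region the same way.

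For the piece $|w|<1/2$, $B_\D f$ is bounded by $\int_{|w|<1/2}|f|\,dA$. By H\"older this is at most $\|f\|_{L^p(|z|^\alpha)}\big(\int_{|w|<1/2}|w|^{-\alpha p'/p}\,dA\big)^{1/p'}$. The last integral is finite exactly when $\alpha/(p-1)<2$, i.e.\ $\alpha<2p-2$. The resulting bounded function lies in $L^p(|z|^\beta)$ because $\beta>-2$.

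For the piece $|w|\ge 1/2$, the weight $|w|^\alpha$ is comparable to $1$. The output on $|z|\ge1/2$ is controlled by unweighted $L^p$ boundedness of $B_\D$. On $|z|<1/2$ the kernel is bounded, so the output is bounded by $\|f\|_{L^1}$ and is again integrable against $|z|^\beta$.

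\textbf{Part (ii).} The same decomposition is used, but now the kernels $1/(w-z)$ and $(1-|w|^2)/((w-z)(1-z\wbar))$ are singular on the diagonal, including near $0$. Away from the origin both are dominated by $|w-z|^{-1}$ on $\D$, whose convolution is bounded on $L^p$ (Young's inequality). Also $S_\D=(I-B_\D)C_\D$, so it suffices to handle $C_\D$, plus $B_\D$ via (i) with $\alpha$ replaced by $\beta$. That step needs $\beta<2p-2$, which is not given, so we will instead treat $S_\D$ directly with the same kernel bound $|w-z|^{-1}$.

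The local problem is therefore the weighted inequality
\[
\int_{|z|<1}\Big(\int_{|w|<1}\frac{|f(w)|}{|w-z|}\,dA(w)\Big)^p|z|^\beta\,dA(z)\lesssim\int|f|^p|w|^\alpha\,dA .
\]
Since $\alpha\le\beta+p$ and $|z|<1$, we have $|z|^{\beta+p}\le|z|^{\alpha}$ pointwise. So we may assume $\alpha=\beta+p$, and then also the weaker case $\alpha<2p-2$.

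Write $g=|f||w|^{\alpha/p}$. The operator becomes the kernel
\[
K(z,w)=|z|^{\beta/p}\,|w|^{-\alpha/p}\,|w-z|^{-1}
\]
acting on $L^p(dA)$. Because $\alpha/p-\beta/p=1$, this kernel is homogeneous of degree $-2$ on $\C$. We apply Schur's test with test functions $|w|^{-s}$, i.e.\ check that $\int|z|^{\beta/p}|w|^{-\alpha/p}|w-z|^{-1}|w|^{-sp'}\,dA(w)\lesssim |z|^{-sp'}$, and the analogous transposed bound with exponent $p$. By homogeneity each reduces to the finiteness of one integral over $\C$ (or over a dilate of $\D$).

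\textbf{Main obstacle.} The hard step is choosing $s$ so that all integrability conditions hold simultaneously:
\begin{itemize}
\item at $w=0$, which requires $\alpha/p+sp'<2$, using $\alpha<2p-2$;
\item at $z=0$ in the transposed integral, which requires $\beta/p+sp>-2$, using $\beta>-2$;
\item at infinity, where the decay of $|w|^{-1-\alpha/p-sp'}$ must beat $dA$.
\end{itemize}
I expect these constraints to define a nonempty interval for $s$ precisely under the stated hypotheses. If the condition at infinity fails, we truncate to $|w|<1$, which removes it, since the kernel is only used inside $\D$.
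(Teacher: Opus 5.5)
Your part (i) is correct, and it is more elementary than the paper's proof. The paper dominates $B_\D$, $S_\D$, $C_\D$ by $S_{-2,0,0}$, $S_{-1,1,-1}$, $S_{0,0,-1}$ and then checks the hypotheses of its general weighted Schur bound, Theorem~\ref{keybound}, which rests on Lemmas~\ref{lm:3_app} and~\ref{schur}. Your plan for (ii) is also viable and simpler than the paper's. You dominate both kernels by $2|w-z|^{-1}$; this holds on all of $\D\times\D$, since $1-|w|^2\le 2(1-|w|)\le 2|1-z\bar w|$. You then reduce to $\alpha-\beta=p$ and run Schur's test with $h=|w|^{-s}$. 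Because the kernel is then homogeneous of degree $-2$, the Schur integrals are exact powers and no Forelli--Rudin type estimates are needed. However, two steps are wrong as written.

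\emph{The reduction.} Raising $\alpha$ to $\beta+p$ only works when $\beta<p-2$. If $\beta\ge p-2$, then $\beta+p\ge 2p-2$, and the inequality for the positive kernel $|w-z|^{-1}$ on $L^p(|w|^{\beta+p})$ is false. Indeed, that space contains some $f\ge 0$ supported in $|w|<1/4$ with $\int f\,dA=\infty$, and then the output is infinite on $\{|z|>1/2\}$. You must move both weights. Choose $\beta^*\in(-2,p-2)$ with $\alpha-p\le\beta^*\le\beta$; this is possible exactly because $\beta>-2$, $\alpha<2p-2$ and $\alpha\le\beta+p$. Then set $\alpha^*=\beta^*+p\ge\alpha$. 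Lowering $\beta$ and raising $\alpha$ only strengthen the inequality on $\D$, so this reduction is legitimate.

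\emph{The Schur constraints.} This step is the real content of (ii), and you leave it as an expectation with misstated conditions. Take $K(z,w)=|z|^{\beta/p}|w|^{-\alpha/p}|w-z|^{-1}$ with $\alpha-\beta=p$, and integrate over $\C$; since $K\ge0$, this dominates the integral over $\D$.
\begin{itemize}
\item $\int_\C K(z,w)|w|^{-sp'}\,dA(w)=c_1|z|^{-sp'}$, where $c_1<\infty$ iff $1<\alpha/p+sp'<2$.
\item $\int_\C K(z,w)|z|^{-sp}\,dA(z)=c_2|w|^{-sp}$, where $c_2<\infty$ iff $1<sp-\beta/p<2$.
\end{itemize}
Your condition $\beta/p+sp>-2$ has the wrong sign, and you omit the condition at infinity in the transposed integral.

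The two intervals for $s$ meet iff $\beta>-2$ (lower end of the first against upper end of the second) and $\alpha<2p-2$ (the reverse pair). So each hypothesis comes from pairing a condition at the origin in one integral with a condition at infinity in the other.

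Your remark that truncating to $\D$ removes the condition at infinity is therefore false. On $\D$, the region $|w|\gg|z|$ contributes $|z|^{\beta/p}\int_{2|z|<|w|<1}|w|^{-\alpha/p-sp'-1}\,dA(w)$. This is $\lesssim|z|^{-sp'}$ as $z\to0$ only if $\alpha/p+sp'>1$, which is the same condition again. If these conditions could be dropped, your argument would prove the estimate for every $\alpha,\beta$, which is false. With these two corrections your argument closes.
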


\begin{proof}
Note that $B_{\D}$,$S_{\D}$   and $C_{\D}$ are bounded by $S_{-2,0,0}$, $S_{-1,1,-1}$ and $S_{0,0,-1}$, respectively (see the Appendix for the definition of $S_{a,b,c}$). It is simple to check that with our hypothesis, all the conditions in Theorem \ref{keybound} are satisfied, thus the theorem follows.
\end{proof}

The next result for the bidisc $D = \D \times \D$ will play an essential role later on. For $j=1, 2$, $B_j$ and $C_j$ denote the Bergman projection and the Cauchy operator corresponding to the $j$-th factor.

\begin{theorem} \label{thm:canonical_bidisk}
	Let $D = \D \times \D$. For a $(0,1)$-form $f = f_1 d\bar{z}_1 + f_2 d\bar{z}_2$ on $D$, let 
	\begin{equation} \label{eq:rep_K}
		K_Df = S_1 f_1 + B_1 S_2 f_2.
	\end{equation}
	If $1 < p < \infty$ and $\beta \in (-2, p-2)$, the folllowing estimate holds
	\begin{equation} \label{eq:est_K}
		\|K_D f\|_{L^p(D,  |z_2|^{\beta})} \lesssim \|f_1\|_{L^p(D,  |z_2|^{\beta})} + \|f_2\|_{L^p(D,|z_2|^{\beta+p})}.
	\end{equation}
    Furthermore, if $\dbar f=0$ then $\dbar K_D f =f$.
\end{theorem}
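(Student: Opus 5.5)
Since the weights depend only on $z_2$, the plan is to apply Theorem \ref{S-weight} one variable at a time and glue the one-dimensional estimates together with Fubini. The solution formula is then checked with the homotopy formula \eqref{BD} applied in each variable.

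\textbf{The term $S_1 f_1$.} The operator $S_1$ acts only in $z_1$, while the weight $|z_2|^\beta$ is constant in $z_1$. For each fixed $z_2$ we apply Theorem \ref{S-weight}(ii) in $z_1$ with $\alpha=\beta=0$; the conditions $\alpha<2p-2$, $0>-2$ and $0\le p$ all hold. This gives
\[
\int_\D |S_1 f_1(z_1,z_2)|^p\,dA(z_1)\lesssim \int_\D |f_1(z_1,z_2)|^p\,dA(z_1).
\]
Multiplying by $|z_2|^\beta$ and integrating in $z_2$ yields $\|S_1f_1\|_{L^p(D,|z_2|^\beta)}\lesssim \|f_1\|_{L^p(D,|z_2|^\beta)}$.

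\textbf{The term $B_1S_2f_2$.} First apply Theorem \ref{S-weight}(i) in $z_1$ with zero weights for each fixed $z_2$, then integrate against $|z_2|^\beta$. This removes $B_1$:
\[
\|B_1S_2f_2\|_{L^p(D,|z_2|^\beta)}\lesssim \|S_2f_2\|_{L^p(D,|z_2|^\beta)}.
\]
Next fix $z_1$ and apply Theorem \ref{S-weight}(ii) in $z_2$ with target weight $|z_2|^\beta$ and source weight $|z_2|^{\alpha}$, $\alpha=\beta+p$. The hypotheses require $\beta>-2$, $\alpha\le \beta+p$ (which holds with equality), and $\alpha<2p-2$, i.e.\ $\beta+p<2p-2$, i.e.\ $\beta<p-2$. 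These are exactly the assumed conditions $\beta\in(-2,p-2)$. Integrating in $z_1$ then gives $\|S_2f_2\|_{L^p(D,|z_2|^\beta)}\lesssim \|f_2\|_{L^p(D,|z_2|^{\beta+p})}$, and combining the two terms proves \eqref{eq:est_K}.

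\textbf{The identity $\dbar K_Df=f$.} Since $S_j$ solves $\di_{\bar z_j}$ in its variable and $B_1$ commutes with $\di_{\bar z_2}$, we get $\di_{\bar z_2}K_Df=\di_{\bar z_2}S_1f_1+B_1f_2$. Also $\di_{\bar z_1}K_Df=f_1$, because $B_1(\cdot)$ is holomorphic in $z_1$. It remains to show $\di_{\bar z_2}S_1f_1=f_2-B_1f_2$. The form is closed, so $\di_{\bar z_2}f_1=\di_{\bar z_1}f_2$. Moving $\di_{\bar z_2}$ inside $S_1$ (the kernel of $S_1$ does not involve $z_2$) gives $\di_{\bar z_2}S_1f_1=S_1\di_{\bar z_1}f_2$, which equals $f_2-B_1f_2$ by \eqref{BD} applied in $z_1$.

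\textbf{Main obstacle.} The hard part is justifying this last identity for merely $L^p$ data rather than $C^1(\bar D)$ data. I would prove it first for smooth $\dbar$-closed forms on slightly shrunk bidiscs, using dilations $f(rz)$. Then I would pass to the limit in the distributional sense, using the estimate \eqref{eq:est_K} already established and the interior smoothness of $\dbar$-closed forms (after regularization). This gives the identity in the sense of distributions.
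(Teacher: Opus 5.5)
Your proposal is correct and follows the paper's proof essentially step by step. It uses the same Fubini reduction to Theorem \ref{S-weight}: unweighted in $z_1$ for $S_1$ and $B_1$, and with source weight $|z_2|^{\beta+p}$ in $z_2$ for $S_2$, where $\beta+p<2p-2$ is exactly $\beta<p-2$. For the solution property it uses the same identities $\di_{\bar z_1}S_1=I$, $\di_{\bar z_1}B_1=0$, $S_1\di_{\bar z_1}=I-B_1$.

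The paper handles your ``main obstacle'' in one line, by density of smooth closed forms on the star-shaped bidisc. Your dilation-then-mollification argument supplies exactly that density. Note that the smoothness comes from the mollification, not from any interior regularity of $\dbar$-closed forms.
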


\begin{proof}

	
	Since it is well-known that $S_{\D}$ is bounded on $L^p$, by a straightforward application of Fubini's theorem, we have
	\begin{align*}
		\|S_1 f_1\|_{L^p(D, |z_2|^{\beta})} &\lesssim   \|f_1\|_{L^p(D, |z_2|^{\beta})}.
	\end{align*}
	Similarly for the second term, we use the boundedness of  $B_{\D}$ on $L^p$, and the second part of Theorem \ref{S-weight}  to obtain
	\begin{align*}
		\|B_1 S_2 f_2\|_{L^p(D, |z_2|^{\beta})}^p &= \int_{\D_2} |z_2|^{\beta} \left( \int_{\D_1} |B_1 S_2 f_2|^p dV_1 \right) dV_2 \\
		&\lesssim \int_{\D_2} |z_2|^{\beta} \left( \int_{\D_1} |S_2 f_2|^p dV_1 \right) dV_2 \\
		&= \int_{\D_1}  \left( \int_{\D_2} |S_2 f_2|^p |z_2|^{\beta} dV_2 \right) dV_1 \lesssim \|f_2\|_{L^p(D, |z_2|^{\beta+p})}^p.
	\end{align*}
	Combining these two results yields the desired estimate \eqref{eq:est_K}.
	
	Now assume that $\dbar f=0$. Since $D$ is a star-shaped domain, smooth closed forms are dense. Thus, we can assume that $f$ is  smooth.   Using $\partial_{\bar{z}_j} S_j = I$, $\partial_{\bar{z}_j} B_j = 0$, and $S_j \partial_{\bar{z}_j} = I - B_j$, we have
	\begin{align*}
		{\partial}_{\bar{z}_1}K_D f &= {\partial}_{\bar{z}_1}S_1 f_1 + {\partial}_{\bar{z}_1} B_1 S_2 f_2 =  f_1, \\
		{\partial}_{\bar{z}_2} K_D f &= S_1 {\partial}_{\bar{z}_2} f_1+ B_1 {\partial}_{\bar{z}_2} S_2 f_2 = S_1 \partial_{\bar{z}_1} f_2 + B_1 f_2  = f_2.
	\end{align*}	
	This concludes the proof.
\end{proof}

That \eqref{eq:rep_K} also gives solution to the $\dbar$-problem on $\D\times\D^*$ follows from the following simple extension of Lemma 3.2 in Zhang \cite{Zha24}.

\begin{lemma}\label{extension}
	Let $1 \le p < \infty$ and  $f=f_1d\w_1+f_2d\w_2\in L^1_{(0,1)}(\D \times \D^*)$ be a $\dbar$-closed form with $f_1\in L^p(\D\times \D^*, |w_2|^{p-2})$. Then $f$ is $\dbar$-closed  on $D$.
\end{lemma}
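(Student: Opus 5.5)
The claim is that $f$ extends to a $\dbar$-closed form on $D=\D\times\D$, where $f$ is defined a.e. on $D$ because $\{w_2=0\}$ has measure zero. We must show that $\dbar f=0$ in the sense of distributions on $D$. The only nontrivial component of $\dbar f$ is $\partial_{\w_2}f_1-\partial_{\w_1}f_2$. So it suffices to show that for every test function $\varphi\in C_c^\infty(D)$,
\[
\int_D \bigl(f_1\,\partial_{\w_2}\varphi - f_2\,\partial_{\w_1}\varphi\bigr)\, dV = 0 .
\]
We know this identity for every $\varphi$ supported away from $\{w_2=0\}$, since $f$ is closed on $\D\times\D^*$.

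The plan is a cutoff argument. Choose $\chi\in C^\infty(\R)$ with $\chi=0$ on $[0,1]$ and $\chi=1$ on $[2,\infty)$, and set $\chi_\epsilon(w)=\chi(|w_2|/\epsilon)$. Apply the known identity to $\chi_\epsilon\varphi$. Since $\chi_\epsilon$ depends only on $w_2$, $\partial_{\w_1}(\chi_\epsilon\varphi)=\chi_\epsilon\partial_{\w_1}\varphi$. This gives
\[
\int_D \chi_\epsilon\bigl(f_1\partial_{\w_2}\varphi - f_2\partial_{\w_1}\varphi\bigr)\,dV = -\int_D f_1\,\varphi\,\partial_{\w_2}\chi_\epsilon\, dV .
\]
As $\epsilon\to0$, the left side tends to the desired integral by dominated convergence, using $f\in L^1$. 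So the task is to show that the right side tends to $0$.

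This is the main step, and it is where the weighted hypothesis on $f_1$ enters. The derivative satisfies $|\partial_{\w_2}\chi_\epsilon|\lesssim \epsilon^{-1}\mathbf 1_{\{\epsilon<|w_2|<2\epsilon\}}$, and on this annulus $\epsilon^{-1}\sim |w_2|^{-1}$.

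For $p>1$, apply Hölder with the weight $|w_2|^{p-2}$:
\[
\Bigl|\int f_1\varphi\,\partial_{\w_2}\chi_\epsilon\Bigr|\lesssim \Bigl(\int_{A_\epsilon}|f_1|^p|w_2|^{p-2}\Bigr)^{1/p}\Bigl(\int_{A_\epsilon}|w_2|^{-p'}|w_2|^{-(p-2)p'/p}\Bigr)^{1/p'},
\]
where $A_\epsilon=\D\times\{\epsilon<|w_2|<2\epsilon\}$. The exponent in the second factor is $-p'(1+(p-2)/p)=-p'(2p-2)/p=-2$. Thus the second factor is $\bigl(\int_{\epsilon<|w_2|<2\epsilon}|w_2|^{-2}\,dA\bigr)^{1/p'}\sim(\log 2)^{1/p'}$, which is bounded uniformly in $\epsilon$. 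The first factor tends to $0$ by absolute continuity of the integral, because $f_1\in L^p(|w_2|^{p-2})$ and $|A_\epsilon|\to0$.

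For $p=1$, no Hölder step is needed. We have $|\partial\chi_\epsilon|\lesssim|w_2|^{-1}$ on $A_\epsilon$, so the right side is bounded by $\int_{A_\epsilon}|f_1||w_2|^{-1}\to0$, since $f_1|w_2|^{-1}\in L^1$.

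Hence the boundary term vanishes in the limit and $f$ is $\dbar$-closed on $D$. The delicate point is precisely that the weight $|w_2|^{p-2}$ is the critical one making the dual factor scale-invariant, which is a logarithmic divergence per dyadic annulus. This is why one needs the tail of the $f_1$ integral to vanish rather than a mere bound.
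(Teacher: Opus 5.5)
Your proof is correct and is essentially the paper's argument: you apply closedness on $\D\times\D^*$ to the product of $\varphi$ with a cutoff in $w_2$, then use H\"older against the weight $|w_2|^{p-2}$, so the dual factor $\int|w_2|^{-2}$ over the annulus stays uniformly bounded and the boundary term vanishes because the tail of the weighted $f_1$ integral does. Your separate treatment of $p=1$ only makes explicit what the paper's H\"older step already covers with $1/p'=0$.
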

\begin{proof}
	We need to show that $$ \int_{\D^2} f_1 \di_{\bw_2}\varphi -f_2 \di_{\bw_1}\varphi  = 0,$$
	for any $\varphi  \in C_c^\infty(\D^2)$. 
	Let $\eta_\varepsilon$  be a smooth function in the $w_2$ variable satisfying $ \mathbf{1}_{\D \setminus B_{2\varepsilon }} \le \eta_\varepsilon  \le \mathbf{1}_{\D \setminus B_{\varepsilon }}$  and $|\nabla \eta_\varepsilon (w_2)|\lesssim |w_2|^{-1}$ for  $w_2\in \D$. Since  $f$ is a closed $(0,1)$-form on $\D \times \D^*$,
	\begin{equation}\label{equa1}
		\int_{\D^2} f_1 \varphi \di_{\bw_2}\eta_\varepsilon   + \eta_\varepsilon (f_1 \di_{\bw_2} \varphi -f_2 \di_{\bw_1}\varphi) = \int_{\D^2} f_1 \di_{\bw_2}(\eta_\varepsilon  \varphi) -f_2 \di_{\bw_1}(\eta_\varepsilon  \varphi) =0.
	\end{equation}
	We have 
	\begin{align}\label{equa2}
		\begin{split}
			|\int_{\D^2} f_1 \varphi \di_{\bw_2}\eta_\varepsilon  | & \lesssim \|\varphi\|_{L^\infty(D)} \int_{\D\times (B_{2\varepsilon  }\setminus B_\varepsilon)} |f_1|  |w_2|^{-1}\\
			&\le \|\varphi\|_{L^\infty(D)} (\int_{\D\times (B_{2\varepsilon} \setminus B_\varepsilon )} |f_1|^p  |w_2|^{p-2})^{1/p} (\int_{\D\times (B_{2\varepsilon } \setminus B_\varepsilon)}  |w_2|^{-2})^{1/p'}\\
			&\lesssim \|\varphi\|_{L^\infty(D)}(\int_{\D\times (B_{2\varepsilon }\setminus B_\varepsilon )} |f_1|^p  |w_2|^{p-2})^{1/p}.
		\end{split}
	\end{align}
	Since $f_1\in L^p(\D\times \D^*, |w_2|^{p-2})$, this vanishes as $\varepsilon \to 0$. Therefore,
	it follows from \eqref{equa1} that
	$$ \int_{\D^2} f_1 \di_{\bw_2}\varphi -f_2 \di_{\bw_1}\varphi = \lim_{\varepsilon \to 0} \int_{\D^2}  \eta_\varepsilon (f_1 \di_{\bw_2} \varphi -f_2 \di_{\bw_1}\varphi) = 0,$$
	completing the proof.  	
\end{proof}
	

\section{Proof of Theorem \ref{thm:main_sol_op}}


The main result of this section is the following theorem, which establishes sharp weighted $L^p$ estimates for a solution to the $\bar{\partial}$-problem on $\H_{m/n}$. Theorem \ref{thm:main_sol_op} follows by taking $\alpha=\alpha'=0$.

\begin{theorem} \label{thm:main_weighted}
	Let $1 < p < \infty$ and $\alpha \in \mathbb{R}$. There is a solution operator $T_{p,\alpha}$ for the  $\bar{\partial}$-problem on $\H_{m/n}$ that is bounded from   $L^p_{(0,1)}(\H_{m/n}, |z_2|^{\alpha})$ to ${L^p(\H_{m/n}, |z_2|^{\alpha'})}$ for any $\alpha'>\alpha-  p \min\{1,\frac{n}{m}\}$.
\end{theorem}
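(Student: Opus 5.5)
We will pull the problem back to $\D\times\D^*$ through the branched covering
\[
\psi(w_1,w_2)=(w_1w_2^{m},\,w_2^{n}),
\]
which maps $\D\times\D^*$ onto $\H_{m/n}$. Indeed, $|z_1|^m=|w_1|^m|w_2|^{mn}<|w_2|^{mn}=|z_2|^n$. The map is an $n$-sheeted covering: if $\zeta$ is an $n$-th root of unity, then $\psi(w_1\zeta^{-m},\zeta w_2)=\psi(w_1,w_2)$. (One may replace this by the standard normalization, but this is the map we commit to.)

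Given a $\dbar$-closed $f\in L^p_{(0,1)}(\H_{m/n},|z_2|^\alpha)$, set $g=\psi^*f=g_1d\w_1+g_2d\w_2$. Then
\[
g_1=\bar w_2^{m}\,f_1\circ\psi,\qquad g_2=m\bar w_1\bar w_2^{m-1}\,f_1\circ\psi+n\bar w_2^{n-1}\,f_2\circ\psi .
\]
The Jacobian is $|J\psi|^2=n^2|w_2|^{2m+2n-2}$. A change of variables, using the factor $n$ from the number of sheets, gives
\[
\int |h\circ\psi|^p\,|w_2|^{n\alpha}\,|w_2|^{2m+2n-2}\,dV = n\int_{\H_{m/n}} |h|^p|z_2|^\alpha\,dV .
\]
Hence $f\in L^p(|z_2|^\alpha)$ corresponds to $g_1\in L^p(\D\times\D^*,|w_2|^{\gamma-mp})$ and $g_2\in L^p(|w_2|^{\gamma-\min(m-1,n-1)p})$, where $\gamma=n\alpha+2m+2n-2$.

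The next step is to solve on the bidisc with a weight shift. Theorem \ref{thm:canonical_bidisk} only allows $\beta\in(-2,p-2)$. To reach arbitrary $\alpha$, we conjugate by a power of $w_2$: write $g=\bar w_2^{k}\tilde g$, or better $w_2^{-k}$-twisted, solve $\dbar \tilde u=\tilde g$ with $\tilde g=w_2^{-k}g$ (multiplication by the holomorphic function $w_2^{-k}$ preserves closedness on $\D\times\D^*$), and set $u=w_2^{k}\tilde u$. The integer $k$ is chosen so that the weight on $\tilde g_1$ becomes some $\beta\in(-2,p-2)$. We must also choose $\beta$ so that $\tilde g_1\in L^p(|w_2|^{p-2})$, which lets Lemma \ref{extension} show that $\tilde g$ is closed on $\D^2$. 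Then Theorem \ref{thm:canonical_bidisk} gives $\tilde u=K_D\tilde g$ with
\[
\|\tilde u\|_{L^p(|w_2|^\beta)}\lesssim \|\tilde g_1\|_{L^p(|w_2|^\beta)}+\|\tilde g_2\|_{L^p(|w_2|^{\beta+p})}.
\]
The gap between the weights on $g_1$ and $g_2$, which is $p\min\{m,n\}$ (up to the $-p$ shift), must be compatible with the $+p$ in \eqref{eq:est_K}. Because $k$ is integral, there is some slack, and this slack is what forces the strict inequality on $\alpha'$. The loss $p\min\{1,n/m\}$ arises as follows: one power of $|w_2|^{p}$ is lost in $u$ versus $g$, and converting back divides exponents by $n$ in the $z_2$-scale. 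This is heuristic, and the exact numerology is to be checked.

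Next we average. Define
\[
U(w)=\frac1n\sum_{\zeta^n=1}u(\zeta^{-m}w_1,\zeta w_2).
\]
Since $g$ is invariant under the deck group, $\dbar U=g$, and $U$ is deck-invariant. Hence $U=v\circ\psi$ for a function $v$ on $\H_{m/n}$ with $\dbar v=f$ (here $\psi$ is a local biholomorphism off $w_2=0$). Set $T_{p,\alpha}f=v$. Averaging is bounded on weighted $L^p$ since the weights are radial in $w_2$. Transferring back by the change-of-variables identity gives $\|v\|_{L^p(|z_2|^{\alpha'})}$ controlled by a $|w_2|$-weighted norm of $u$. We also need the operator to be independent of $f$ beyond linearity, with $k$ and $\beta$ fixed by $p$ and $\alpha$, which they are.

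The main obstacle is the exponent bookkeeping. Simultaneously, we need $\beta\in(-2,p-2)$, the Lemma \ref{extension} hypothesis, the $g_2$ weight fitting $\beta+p$ (the cross term $m\bar w_1\bar w_2^{m-1}f_1$ is the delicate one when $m>n$), and the final loss exactly $p\min\{1,n/m\}$. If the integer twist is too coarse, we will instead use the non-integral weight freedom in Theorem \ref{S-weight} directly, i.e.\ $\alpha\le\beta+p$ with $\beta>-2$ for $S_2$, replacing Theorem \ref{thm:canonical_bidisk}'s fixed shift by a tunable one.
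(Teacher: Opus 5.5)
Your architecture matches the paper's: pull back by a branched cover of $\D\times\D^*$, twist by an integer power of $w_2$, extend closedness across $\{w_2=0\}$ via Lemma~\ref{extension}, apply $K_D$ from Theorem~\ref{thm:canonical_bidisk}, untwist, and average over the deck group. As written, though, two concrete points fail.

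\textbf{The covering map.} Your map does not land in $\H_{m/n}$. In fact $|w_1w_2^m|^m=|w_1|^m|w_2|^{m^2}$, not $|w_1|^m|w_2|^{mn}$, and $|w_2^n|^n=|w_2|^{n^2}$. So your $\psi$ maps $\D\times\D^*$ onto $\{|z_1|^n<|z_2|^m<1\}=\H_{n/m}$. For $\H_{m/n}$ you need $\psi(w)=(w_1w_2^n,w_2^m)$. This is an $m$-sheeted cover with deck group generated by $(w_1,w_2)\mapsto(e^{-2\pi i n/m}w_1,e^{2\pi i/m}w_2)$, and $|J_\R\psi|=m^2|w_2|^{2m+2n-2}$. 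The change of variables then gives weight $\gamma=m\alpha+2m+2n-2$, and the pullback is $g_1=\bar w_2^{\,n}f_1\circ\psi$, $g_2=n\bar w_1\bar w_2^{\,n-1}f_1\circ\psi+m\bar w_2^{\,m-1}f_2\circ\psi$. This mismatch is why your heuristic for the loss does not close. With your map, the loss you would actually get after dividing by $n$ is $p\min\{m,n\}/n=p\min\{1,m/n\}$, and it lives on the wrong domain.

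\textbf{The choice of twist.} The step you leave ``to be checked'' is the real content, and the normalization you propose is the wrong one. With the correct map, $w_2^lg_1\in L^p(|w_2|^{\gamma-p(n+l)})$ and $w_2^lg_2\in L^p(|w_2|^{\gamma-p(\min\{m,n\}+l-1)})$. If you take $\beta$ to be the weight of $w_2^lg_1$, then $w_2^lg_2$ lies only in $L^p(|w_2|^{\beta+p(n-\min\{m,n\}+1)})$. That space is not contained in $L^p(|w_2|^{\beta+p})$ when $n>m$; for example with $m=1$, $n=2$ the $f_2$-term of $g_2$ carries no power of $\bar w_2$.

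The fix is to normalize on $g_2$. Since $(-2,p-2)$ has length $p$, there is a unique integer $l$ with $\beta:=\gamma-p(\min\{m,n\}+l)\in(-2,p-2)$, provided $m\alpha+2m+2n\notin p\Z$. With this choice:
\begin{itemize}
\item $w_2^lg_2\in L^p(|w_2|^{\beta+p})$ exactly.
\item $w_2^lg_1\in L^p(|w_2|^{\beta})\subset L^p(|w_2|^{p-2})$ automatically, because $n\ge\min\{m,n\}$, $|w_2|<1$ and $\beta<p-2$. So the hypothesis of Lemma~\ref{extension} comes for free.
\item The output $w_2^{-l}K_D(w_2^lg)$ lies in $L^p(|w_2|^{\gamma-p\min\{m,n\}})$, independent of $l$, and pushes forward to $L^p(\H_{m/n},|z_2|^{\alpha-p\min\{1,n/m\}})$.
\end{itemize}

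Consequently the integrality of $l$ creates no slack and costs nothing. The strict inequality on $\alpha'$ is needed only in the exceptional case $m\alpha+2m+2n\in p\Z$. There $\beta$ hits an endpoint for every $l$, so one first increases $\alpha$ slightly, using $L^p(|z_2|^\alpha)\subset L^p(|z_2|^{\tilde\alpha})$ for $\tilde\alpha>\alpha$.

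\textbf{The fallback.} Using the non-integral weight freedom in Theorem~\ref{S-weight} would not rescue the argument. The window $(-2,p-2)$ comes precisely from that theorem's restrictions on $S_2$ (input weight $<2p-2$, output weight $>-2$) and from Lemma~\ref{extension}. To move an arbitrary $\alpha$ into that window while keeping the form $\dbar$-closed, you must multiply by a nonvanishing holomorphic function of $w_2$ on $\D^*$. The natural choice is an integer power $w_2^l$, since non-integer powers are not single-valued there.
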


\begin{proof}
	 We first construct the operator  and prove its boundedness on weighted $L^p$ spaces on $\H_{m/n}$. Then we will prove that it is indeed a solution to the $\bar{\partial}$-problem on $\H_{m/n}$.  
	 For convenience, in this proof, $F\in X$ means $\|F\|_X\lesssim 1$. Thus, our aim is to contruct a solution operator $T_{p,\alpha}$ so that for $f \in L_{(0,1)}^p(\H, |z_2|^\alpha)$, $T_{p,\alpha} f \in L^p(\H, |z_2|^{\alpha'})$.
	 
	 We will utilize the branched covering map  
	 \begin{align} \label{eq:psi_map}
	 	\psi: \D \times \D^* &\to \H_{m/n} \\
	 	w=(w_1, w_2) &\mapsto z=(z_1, z_2) = (w_1 w_2^n, w_2^m).\notag
	 \end{align}
	 The complex Jacobian determinant of $\psi$ is $|J_{\mathbb{C}}\psi(w)| = m w_2^{m+n-1}$, while its real Jacobian determinant is 
	 \begin{equation} \label{eq:jacobian}
	 	|J_{\mathbb{R}} \psi(w)| = m^2 |w_2|^{2(m+n-1)}.
	 \end{equation}
	 Thus, if $ J = 2m + 2n - 2$ then
	 \begin{equation} \label{eq:changevar}
	 	\|f\|^p_{L^p(\H_{m/n}, |z_2|^{\alpha})} = m^2 \|f\circ \psi\|^p_{L^p(\D \times \D^*, |w_2|^{m\alpha+J})} .
	 \end{equation}
	 Let $g = \psi^* f$ be the pull-back of $f$. Then $g=g_1d\bar{w}_1+g_2d\bar{w}_2$ where	
	 \begin{equation} \begin{cases} \label{pullback_components}
	 	g_1(w)=  \bw_2^n f_1(w_1  w_2^n,w_2^m)\\
	 	g_2(w)=n \bw_1 \bw_2^{n-1} f_1(w_1  w_2^n,w_2^m)+ m\bw_2^{m-1} f_2(w_1  w_2^n,w_2^m).
	 \end{cases}\end{equation}
Since $\psi$ is holomorphic, $g$ is $\bar{\partial}$-closed on $\D \times \D^*$, hence so is $w_2^l g$ for any $l \in \mathbb{Z}$. We will now choose a specific value $l$ in order to apply Theorem \ref{thm:canonical_bidisk}. The change of variable formula \eqref{eq:changevar} gives
$$\begin{cases}
	w_2^l g_1 \in L^p(D, |w_2|^{m\alpha + J - p(n+l)})\\
	w_2^l g_2 \in L^p(D, |w_2|^{m\alpha + J - p(\min\{m,n\}+l-1)}) .
\end{cases}$$
Note that we can slightly increase $\alpha$  if needed so that 
$$m\alpha+2(m+n) \notin p\Z$$
while preserving the condition $\alpha'>\alpha-  p \min\{1, n/m\}$.
Then there is a unique $l\in \Z$ satisfying
$$ \beta := m\alpha + J - p(\min\{m,n\}+l) \in(-2,p-2).$$

Since $w_2^l g_1\in L^p(D,|w_2|^\beta)$, by Lemma \ref{extension}, $w_2^l g$ extends to a $\bar{\partial}$-closed form on $D$. Thus, by Theorem \ref{thm:canonical_bidisk} 
\begin{equation}
	K_{D}(w_2^lg)\in L^p(D, |w_2|^{m\alpha + J - p(\min\{m, n\}+l)})
\end{equation}
solves $\dbar K_{D}(w_2^lg) = w_2^lg $ in $D$, hence, 
\begin{equation} \label{eq:K}
	K_{D,l} (g) := w_2^{-l}K_{D}(w_2^lg) \in L^p(D, |w_2|^{m\alpha + J - p\min\{m, n\}})
\end{equation}
solves $\dbar K_{D,l}(g) = g $ in $\D\times\D^*$.

When $m=1$, $\psi$ is a biholomorphism, so a solution operator on $\H_{1/n}$ is simply 
$T_{p,\alpha} f = K_{D,l} (g)\circ \psi^{-1}$. When $m > 1$, $\psi$ is an $m$-to-$1$ covering with a cyclic deck transformations group of order $m$ generated by
\[ \sigma(w_1, w_2) = (e^{-2\pi i n /m} w_1, e^{2\pi i / m} w_2), \]
so we take average over the fibers 
\begin{equation}\label{eq:V}
	v = \frac{1}{m} \sum_{j=0}^{m-1} K_{D,l} (g)\circ \sigma^j.
\end{equation}
Since  $\psi = \psi \circ \sigma^j$, $(\sigma^j)^* g = g$ for all $j$. It follows that $\bar{\partial} v = g$ in $\D\times\D^*$. 
As $v= v\circ \sigma$, 
\begin{equation} \label{eq:full_T_def}
	T_{p,\alpha} f (\psi (w)) := v (w)
\end{equation}
is well-defined. Furthermore, from \eqref{eq:changevar} and \eqref{eq:K}, we have
$$T_{p,\alpha} f \in L^p(\H, |z_2|^{\alpha - p\min\{1, \frac{n}{m}\}}),$$
which is the desire estimate.
	
We will now show that $\dbar T_{p,\alpha} f = f$. Let $\eta$ be a $(2,1)$-form with compact support on $\H_{m/n}$. Then using $\bar\partial v = g$ we have
\begin{align*}
	0 &=\int_{\D\times\D^*} \dbar(v\wedge \psi^*\eta) = \int_{\D\times\D^*} \dbar v\wedge \psi^*\eta + v\wedge \dbar\psi^*\eta  \\
	 & = \int_{\D\times\D^*} \psi^*f \wedge \psi^*\eta + \psi^* T_{p,\alpha} f \wedge \psi^*(\dbar\eta ) \\
	 & = m \int_{\H} f \wedge \eta + T_{p,\alpha} f \wedge \dbar\eta .
\end{align*}
This completes the proof.


\end{proof}

\section{Proof of Theorem \ref{thm:canonical_rational}}
\noindent{\it (i) Boundedness for $p\in(p_0,p_2)$:}

As noted in the Introduction, for $p \in (p_1, p_2)$, the result follows immediately from the known $L^p$ boundedness of $B_{\H_{m/n}}$ established in \cite{EdMc17, ChZe16} and the formula
\[K_{\H_{m/n}} f=(I-B_{\H_{m/n}})T_pf.\]

To extend the range down to  $p_0$, we utilize the following result for Bergman--Toeplitz operators:
\begin{theorem}[\cite{BTRWZ22}] \label{thm:toeplitz_boundedness}
	Let $0 \le \alpha \le \frac{m+n-1}{m}$ and $B_{\alpha}u:= B_{\H_{m/n}}\left(|z_2|^\alpha u \right)$. Then  $B_{\alpha}$ is bounded from $L^p(\H_{m/n})$ to $A^p(\H_{m/n})$  if and only if 
	\begin{equation} \label{eq:toeplitz_range}
		\frac{2m+2n}{m+n+1+m\alpha} < p < \frac{2m+2n}{m+n-1}.
	\end{equation}
\end{theorem}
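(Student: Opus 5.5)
We would prove Theorem \ref{thm:toeplitz_boundedness} by pulling everything back through the branched covering $\psi$ of \eqref{eq:psi_map}, as in the proof of Theorem \ref{thm:main_weighted}. The map $h\mapsto (h\circ\psi)\,J_{\C}\psi$, with $J_{\C}\psi=m w_2^{m+n-1}$, is (up to the constant $m$) an isometry of $A^2(\H_{m/n})$ onto the closed subspace of $A^2(\D\times\D^*)=A^2(\D\times\D)$ consisting of functions $F$ with $F\circ\sigma=\chi\,F$, where $\chi$ is the fixed character $\chi=(e^{2\pi i/m})^{m+n-1}$ (the constant factor $e^{-2\pi i n/m}$ comes from $w_1$). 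The functions in this subspace are exactly the expansions $\sum c_{a,k} w_1^a w_2^k$ with $k\ge 0$ and $k\equiv na+m+n-1 \pmod m$, i.e. $k=na+mb+m+n-1$ with $b\in\Z$. Hence the Bergman projection transforms as
\[
(B_{\H_{m/n}}u)\circ\psi\cdot w_2^{m+n-1}=\Pi\bigl( (u\circ\psi)\, w_2^{m+n-1}\bigr),
\]
where $\Pi$ is the orthogonal projection of $L^2(\D^2)$ onto that subspace. Its kernel is obtained from the bidisc kernel by keeping only the admissible monomials.

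\textbf{Reduction to one variable.} Expanding in the angular variable of $w_1$, the kernel of $\Pi$ splits as $\sum_{a\ge0}(a+1)(w_1\bar\zeta_1)^a\,k_a(w_2,\zeta_2)$. Here $k_a(w_2,\zeta_2)=\sum (k+1)(w_2\bar\zeta_2)^k$, the sum running over $k\ge 0$ with $k\equiv na+m+n-1\pmod m$. By \eqref{eq:changevar}, $L^p$-boundedness of $B_\alpha$ is equivalent to boundedness on $L^p(\D^2,|w_2|^{J})$ of
\[
v\mapsto w_2^{-(m+n-1)}\,\Pi\bigl(|w_2|^{m\alpha}w_2^{m+n-1}v\bigr).
\]
Summing the arithmetic progression, we would bound each $k_a$ by the full disc kernel times a root-of-unity average. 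We would then dominate the whole operator by a positive kernel of the form $S_{a,b,c}$ in the $w_2$ variable, with $w_1$ handled by the (positive) Bergman kernel of $\D$. The weights are $|w_2|^{J}$ on the target side and $|w_2|^{J+m\alpha p}\cdot|w_2|^{-(m+n-1)p}$-type factors on the source side. We would then apply Theorem \ref{keybound} and check that its conditions reduce exactly to \eqref{eq:toeplitz_range}. The lower endpoint $\frac{2m+2n}{m+n+1+m\alpha}$ should come from the integrability at $w_2=0$ of the extra factor $|w_2|^{m\alpha}$. The upper endpoint $p_2$ should come from the division by $w_2^{m+n-1}$, which $\alpha$ does not help.

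\textbf{Main obstacle.} The delicate step is controlling the singular factor $w_2^{-(m+n-1)}$ uniformly in $a$. The admissible range $k\ge 0$ forces $b\ge -(na+m+n-1)/m$, so the lowest surviving term behaves like $w_2^{-r}$ with $0\le r\le m+n-1$ depending on $a\bmod m$. A crude absolute-value bound loses the cancellation. We would first try to write $\Pi$ as $\frac1m\sum_j \chi^{-j}(\cdot)\circ\sigma^j$ applied to $B_{\D^2}$. The bad part is then a finite combination of kernels $\frac{(w_2\bar\zeta_2)^{-r}\,\text{(smooth)}}{(1-w_1\bar\zeta_1)^2(1-\omega w_2\bar\zeta_2)^2}$ with $\omega$ an $m$-th root of unity. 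These are exactly controlled by $S_{-2,0,0}$ in $w_1$ and by weighted $S_{a,b,c}$ kernels in $w_2$, which avoids summing over $a$ altogether.

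\textbf{Necessity.} We would test on explicit functions. For $p\ge p_2$, take $u=\overline{z_2}^{\,s}|z_2|^{-\alpha}\mathbf 1_{\{|z_2|>1/2\}}$ with suitable $s$; then $B_\alpha u$ is a nonzero multiple of $z_2^{-1}$-type monomials, and these fail to lie in $L^p$ exactly when $p\ge p_2$. For $p$ at or below the lower endpoint, we would argue by duality with test functions concentrated near $z_2=0$. Those produce the monomial $z_1^{a}z_2^{b}$ with most negative admissible $b$, whose $L^{p'}$ norm against $|z_2|^{\alpha}$ diverges precisely when $p\le \frac{2m+2n}{m+n+1+m\alpha}$.
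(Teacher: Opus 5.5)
The paper does not prove this statement. It is quoted from \cite{BTRWZ22} and used as a black box in Section 4, so your proposal has to stand on its own, using the tools the paper does provide. Measured that way, your sufficiency argument works, and is simpler than you think. The necessity argument at $p_2$ has a genuine gap.

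\textbf{The gap: necessity at $p_2$.} Your test function $u=\bar z_2^{\,s}|z_2|^{-\alpha}\mathbf 1_{\{|z_2|>1/2\}}$ gives $B_\alpha u=B_{\H_{m/n}}(\bar z_2^{\,s}\mathbf 1_{\{|z_2|>1/2\}})=c\,z_2^{-s}$. Membership $z_2^{-s}\in A^2(\H_{m/n})$ forces $ms<m+n$, and $z_2^{-s}\notin L^p$ only for $p\ge \frac{2m+2n}{ms}$. This threshold equals $p_2$ only when $ms=m+n-1$, i.e.\ when $m\mid n-1$. For example, with $m=3$, $n=2$ you only get unboundedness for $p\ge 10/3$, not on $[5/2,10/3)$.

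Pure powers of $z_2$ cannot detect $p_2$. What matters is the weighted degree $na+mb$ of $z_1^az_2^b$, and the extremal monomials are those with $n(a+1)+m(b+1)=1$. This is also the correct replacement for your ``most negative admissible $b$''; for free $a$, $b$ alone is not the right quantity.

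The fix is the monomial used in part (ii) of the proof of Theorem \ref{thm:canonical_rational}. Take $u=z_1^k\bar z_2^{\,l}|z_2|^{-\alpha}\mathbf 1_{\{|z_2|>1/2\}}$ with $n(k+1)+m(1-l)=1$, $k\ge 0$, $l\ge 1$.
\begin{itemize}
\item By orthogonality of monomials, $B_\alpha u=c\,z_1^kz_2^{-l}$ with $c>0$.
\item Its pullback is $w_1^kw_2^{-(m+n-1)}$, so $z_1^kz_2^{-l}\in L^p(\H_{m/n})$ if and only if $p<p_2$.
\item The same $g=z_1^k\bar z_2^{\,l}\mathbf 1_{\{|z_2|>1/2\}}$ handles the lower endpoint through the adjoint $B_\alpha^*=|z_2|^\alpha B_{\H_{m/n}}$ on $L^{p'}$. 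Indeed $|z_2|^\alpha z_1^kz_2^{-l}\in L^{p'}$ if and only if $(m+n-1-m\alpha)p'<2m+2n$, i.e.\ if and only if $p>\frac{2m+2n}{m+n+1+m\alpha}$.
\end{itemize}
Test functions concentrated near $z_2=0$ are not needed.

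\textbf{Sufficiency.} Your pullback route is fine, and your ``main obstacle'' is not an obstacle. Since $\sigma$ is unitary and preserves $\D^2$, $B_{\D^2}$ commutes with it. Hence $\Pi F=B_{\D^2}F$ whenever $F\circ\sigma=\chi F$, so no root-of-unity decomposition is needed for $v=u\circ\psi$. With $N=m+n-1$, the pulled-back operator has kernel
\[
\frac{\zeta_2^{N}|\zeta_2|^{m\alpha}}{w_2^{N}(1-w_1\bar\zeta_1)^2(1-w_2\bar\zeta_2)^2}.
\]
The crude absolute-value bound on this kernel is already sharp.
\begin{itemize}
\item In $w_1$ it is the positive Bergman operator, bounded on $L^p(\D)$.
\item In $w_2$, substitute $h=|\zeta_2|^{N+m\alpha}f$. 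You then need $S_{-2,0,0}:L^p(\D,|z|^{\tilde\alpha})\to L^p(\D,|z|^{\tilde\beta})$ with $\tilde\alpha=2N-(N+m\alpha)p$ and $\tilde\beta=2N-Np$.
\item In Theorem \ref{keybound}, the condition $\tilde\alpha<2p-2$ is exactly $p>\frac{2m+2n}{m+n+1+m\alpha}$, and $\tilde\beta>-2$ is exactly $p<p_2$.
\item The remaining conditions of Theorem \ref{keybound} hold because $\tilde\alpha\le\tilde\beta$, which uses $\alpha\ge 0$.
\end{itemize}
No cancellation is lost. For residue classes of $a$ with $na+n-1\equiv 0\pmod m$ (which exist since $\gcd(m,n)=1$), the output genuinely behaves like $w_2^{-N}$.

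Your description of the ``bad'' kernels with a factor $(w_2\bar\zeta_2)^{-r}$ is also wrong. The $\zeta_2$-factor is $\zeta_2^{N}|\zeta_2|^{m\alpha}$, which vanishes rather than blows up, and that vanishing is precisely what produces the lower endpoint.
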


We first note that if $p\in(p_0,p_2)$ then there exists  $0<\alpha<\min\{1,\frac{m}{n}\}$ such that 
\[\frac{2m+2n}{m+n+1+m\alpha} < p.\]
With this $\alpha$, using
\[ B_{\H_{m/n}} T_p f  = B_{\alpha}\left(|z_2|^{-\alpha} T_p f \right) \]
and  Theorems  \ref{thm:toeplitz_boundedness} and \ref{thm:main_weighted}, we obtain
\[\|B_{\H_{m/n}} T_p f  \|_{L^p(\H_{m/n})} \lesssim \||z_2|^{-\alpha} T_p f \|_{L^p(\H_{m/n})} \lesssim \|f \|_{L^p(\H_{m/n})}.\]

\noindent{\it (ii) Unboundedness for $p\ge p_2$}:

We will in fact prove the stronger fact that the canonical solution on $\H_{m/n}$ is not bounded from $L^\infty(\H_{m/n})$ to any Lorentz space $L^{p_2,q}(\H_{m/n})$ with $q<\infty$. Our example will use a simple extension of the well-known fact that $B_{\H_1}(\bar{z}_2) = cz_2^{-1}$.
	
	Since $\gcd(m,n)=1$, there exist $k\ge0$ and $l\ge 1$ so that  $$n(k+1) + m(-l+1) = 1.$$ Let $f =\bar{\partial}(z_1^k \bar{z}_2^l) \in L_{(0,1)}^\infty(\H_{m/n}) $. The canonical solution corresponding to $f$ is then
\[ K_{\H_{m/n}} f = z_1^k \bar{z}_2^l - B_{\H_{m/n}} (z_1^k \bar{z}_2^l)=z_1^k \bar{z}_2^l -c z_1^k z_2^{-l}. \]
A simple computation show that $z_1^k z_2^{-l} \notin L^{p_2,q}$ if $n(k+1) + m(-l+1) \leq 1$. Thus, $K_{\H_{m/n}} f\notin L^{p_2,q}$

\appendix
\section{Sharp Integral Estimates on the Unit Disk}

In this section, we establish the weighted $L^p$ estimates for the family of operators on $\D$ defined by 
\[ S_{a,b,c}f(z) = \int_{\D} s_{a,b,c}(z,w) f(w) dA(w), \]
where
\[ s_{a,b,c}(z,w) = |1-z\bar{w}|^a (1-|w|^2)^b |z-w|^c.\]
To do so, we will use Schur's lemma, together with sharp estimates for the integral
\begin{equation} \label{eq:I_general}
	I_{a,b,c,d}(z) = \int_{\D} |1-z\bar{w}|^a (1-|w|^2)^b |w-z|^c |w|^d dA(w),
\end{equation}
where $a, b, c, d \in \mathbb{R}$ and $z\in\D$.
To describe the behaviour of these integrals we use the following auxiliary function defined for $a \in \mathbb{R}$ and $t \in (0, 1)$
\begin{equation} \label{eq:Ta_def}
	T_a(t) = \begin{cases} 
		1 & \text{if } a > 0, \\ 
		1 - \log t & \text{if } a = 0, \\ 
		t^a & \text{if } a < 0. 
		\end{cases}
\end{equation}
Our estimate for $I_{a,b,c,d}(z)$ is as follows.
\begin{lemma} \label{lm:3_app}
	Suppose $b > -1$ and $c, d > -2$. Then for $z \in \D$, we have:
	\[ I_{a,b,c,d}(z) \lesssim T_{a+b+c+2}(1-|z|) T_{c+d+2}(|z|). \]
\end{lemma}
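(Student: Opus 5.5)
We split the disk into regions adapted to the two singular points $w=z$ and $w=0$, and to the boundary. Write $r=|z|$, and note that $|1-z\bar w|\ge 1-|z|$ and $|1-z\bar w|\ge 1-|w|$. We may assume $|z|\ge 1/2$ or $|z|\le 1/2$ separately.

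\emph{Case $|z|\le 1/2$.} Here $T_{a+b+c+2}(1-|z|)\sim 1$ and $|1-z\bar w|\sim 1$. So it suffices to show
\[
\int_\D (1-|w|^2)^b|w-z|^c|w|^d\,dA(w)\lesssim T_{c+d+2}(|z|).
\]

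\begin{itemize}
\item On $|w|>3/4$ the integrand is $\lesssim(1-|w|)^b$, which is integrable since $b>-1$.
\item On $\{|w-z|<|z|/2\}$ we have $|w|\sim|z|$, so this piece contributes $|z|^d\int_{|w-z|<|z|/2}|w-z|^c\sim |z|^{c+d+2}$, using $c>-2$.
\item On $\{|w|<|z|/2\}$ we have $|w-z|\sim |z|$, giving $|z|^{c}|z|^{d+2}$, using $d>-2$.
\item On the remaining set, $|w|\le 3/4$ with $|w-z|\gtrsim|z|$ and $|w|\gtrsim |z|$, we have $|w-z|\lesssim |w|$ and also $|w-z|\gtrsim |w|$ roughly, since $|w|\le |w-z|+|z|\lesssim|w-z|$. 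Thus the integrand is $\sim |w|^{c+d}$. Integrating radially over $|z|/2\lesssim|w|\le 3/4$ gives $\int_{|z|/2}^{1}t^{c+d+1}\,dt\sim T_{c+d+2}(|z|)$.
\end{itemize}

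Since $|z|^{c+d+2}\le T_{c+d+2}(|z|)$, this finishes the case.

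\emph{Case $|z|\ge 1/2$.} Here $T_{c+d+2}(|z|)\sim1$, and $|w|^d$ only matters near $w=0$. On $|w|<1/4$ all the other factors are $\sim1$ and $\int|w|^d<\infty$. On $|w|\ge 1/4$ we drop $|w|^d$ and must show
\[
\int_{|w|\ge1/4}|1-z\bar w|^a(1-|w|^2)^b|w-z|^c\,dA\lesssim T_{a+b+c+2}(1-|z|).
\]
This is the standard Forelli–Rudin type estimate with an extra $|w-z|^c$, and we decompose again with $\delta=1-|z|$.

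\begin{itemize}
\item On $\{|w-z|<\delta/2\}$ we have $1-|w|\sim\delta$ and $|1-z\bar w|\sim\delta$. The contribution is $\delta^{a+b}\int_{|w-z|<\delta/2}|w-z|^c\sim\delta^{a+b+c+2}$.
\item On the complement, $|w-z|\gtrsim\delta$ gives $|1-z\bar w|\sim |w-z|+\delta+(1-|w|)$. Here $|w-z|\lesssim|1-z\bar w|$ always, and conversely $|1-z\bar w|\le |w-z|+|z|(1-|w|)\lesssim |w-z|+(1-|w|)$.
\end{itemize}

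It is cleanest to use local coordinates near $z/|z|$: $w=(1-s)e^{i\theta}$, $s\in(0,3/4]$, $\theta$ measured from $\arg z$. Then $|1-z\bar w|\sim \delta+s+|\theta|$, and $|w-z|\sim |\delta-s|+|\theta|$.

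The piece where $|\delta-s|+|\theta|\gtrsim \delta+s$ has integrand $\sim(\delta+s+|\theta|)^{a+c}s^b$. Integrating $\theta$ first gives $\lesssim (\delta+s)^{a+c+1}$, up to a logarithm or $1$ depending on the sign of $a+c+1$, and then integrating in $s$ gives $T_{a+b+c+2}(\delta)$. The borderline sign cases are absorbed because the total exponent governs the outcome.

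The remaining piece has $s\sim\delta$, $|\theta|\lesssim\delta$ and $|w-z|\gtrsim\delta$. There everything is $\sim\delta$, on area $\delta^2$, which again gives $\delta^{a+b+c+2}$.

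The main obstacle is this last iterated integral. If $a+c+1<0$ but $a+b+c+2>0$, a naive bound could produce spurious logs or a wrong power, so we must check each sign combination of $a+c+1$ and $a+b+c+2$. For the $\theta$-integral we use
\[
\int_0^1(\delta+s+\theta)^{a+c}\,d\theta\lesssim T_{a+c+1}(\delta+s),
\]
and we then verify that $\int_0^1 T_{a+c+1}(\delta+s)\,s^b\,ds\lesssim T_{a+b+c+2}(\delta)$ for $b>-1$, by splitting at $s=\delta$.
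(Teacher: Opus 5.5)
Your argument is correct, but it is organized differently from the paper's.

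\textbf{The paper's route.} The paper does not split on the size of $|z|$. It uses the single decomposition $\mathbb{D}=D_1\cup D_2$ with $D_1=\{|w-z|\le \frac{1-|z|}{2}\}$.
\begin{itemize}
\item On $D_1$, both $|1-z\bar w|$ and $1-|w|^2$ are $\sim 1-|z|$. What remains is $\int|w-z|^c|w|^d\,dA$, which for $|z|\le 3/4$ is handled by the same three-region split you use in your case $|z|\le 1/2$.
\item On $D_2$, it shows $|1-z\bar w|\sim|w-z|$ by a maximum-principle comparison on $\partial D_2$. The $D_2$ piece is then dominated by $I_{a+c,b,0,d}(z)$, and Lemma~\ref{lm:2} (the case $c=0$) finishes. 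The missing factor $T_{c+d+2}(|z|)$ is harmless because $T_\gamma\ge 1$.
\end{itemize}

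\textbf{Your route.} You separate the two singular sets by the size of $|z|$, so that one $T$-factor is $\sim 1$ in each case. For $|z|\ge 1/2$ you then work in polar coordinates adapted to $z$, with the explicit two-sided bounds $|1-z\bar w|\sim\delta+s+|\theta|$ and $|w-z|\sim|\delta-s|+|\theta|$. Your iterated $\theta$-then-$s$ integral is exactly the computation in the proof of Lemma~\ref{lm:2}, so you are effectively re-proving that lemma inline with the extra factor built in.

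\textbf{Comparison.} The paper's argument is shorter and modular, since it reduces $c\neq 0$ to $c=0$. Yours is self-contained and makes the product structure of the bound transparent. The cost is an extra subregion ($s\sim\delta$, $|\theta|\lesssim\delta$) and more case checking.

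\textbf{A slip.} The inequality $|1-z\bar w|\le|w-z|+|z|(1-|w|)$ is false; take $w=z$, or $z=0$. The correct elementary bound is $|1-z\bar w|\le 2\delta+|w-z|$, from $1-z\bar w=(1-|z|^2)+z(\bar z-\bar w)$. Combine it with $|w-z|\le|1-z\bar w|$, which follows from $|1-z\bar w|^2-|w-z|^2=(1-|z|^2)(1-|w|^2)\ge 0$. Together these give $|1-z\bar w|\sim|w-z|$ on $\{|w-z|\ge\delta/2\}$ directly, which is an elementary substitute for the paper's maximum-principle step.

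The slip is not load-bearing, since afterwards you only use the coordinate estimates, and those are correct. But with this comparison in hand you could drop the polar-coordinate subdivision entirely, including your region where $s\sim\delta$, and reduce to the $c=0$ estimate as the paper does.
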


Note that this is a generalization of the estimate for $I_{-2,b,0,0}$ with $b\in(-1,0)$ of Forelli and Rudin  \cite{FoRu74}, for $I_{-2,b,0,d}$ with $b\in(-1,0), d\in(-2,0)$ of Edholm and McNeal \cite{EdMc16}, and for $I_{a,b,0,d}$ with $a\le -2$, $b\in(-1,0), d>-2$ of Khanh, Liu and Thuc \cite{KhLiTh17}. Our proof first  generalizes the result for this special case $c=0$. 
\begin{lemma}\label{lm:2}
	Suppose $b>-1$ and $d>-2$.
	Then 
	\begin{equation}
		I_{a,b,0,d}(z)\lesssim T_{a+b+2}(1-|z|). 
	\end{equation} 
\end{lemma}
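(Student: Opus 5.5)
The plan is to split $\D$ according to the size of $|w|$ relative to $|z|$. The weight $|w|^d$ is only singular at $w=0$, so away from the origin the estimate should reduce to the classical Forelli--Rudin bound. Throughout, $z\in\D$ is fixed and we use polar coordinates $w=re^{i\theta}$.

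\textbf{Region $|w|\ge 1/2$.} Here $|w|^d\sim 1$, so the integral is dominated by
\[
\int_{\D}|1-z\bar w|^a(1-|w|^2)^b\,dA(w).
\]
This is handled by the standard Forelli--Rudin estimate. Integrating in $\theta$ first gives $\int_0^{2\pi}|1-rze^{-i\theta}|^a\,d\theta \lesssim T_{a+1}(1-r|z|)$. We then integrate $(1-r)^bT_{a+1}(1-r|z|)$ over $r\in(1/2,1)$, splitting into $1-r<1-|z|$ and $1-r>1-|z|$ and using $1-r|z|\sim (1-r)+(1-|z|)$.

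The case analysis on the sign of $a+1$ and of $a+b+2$ yields $T_{a+b+2}(1-|z|)$. The relevant cases are:
\begin{itemize}
\item For $a+1<0$ and $a+b+2<0$, the inner part gives $(1-|z|)^{a+1}(1-|z|)^{b+1}$.
\item For $a+b+2=0$, we get a logarithm.
\item When $a+1\ge 0$, the bound is trivially $O(1)$ or a logarithm with exponent sum still positive, since $b>-1$.
\end{itemize}
Here $b>-1$ is exactly what makes the $r$-integral converge near $r=1$.

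\textbf{Region $|w|<1/2$.} Here $|1-z\bar w|\sim 1$ and $1-|w|^2\sim 1$, so the contribution is bounded by $\int_{|w|<1/2}|w|^d\,dA\lesssim 1$, using $d>-2$. Since $T_{a+b+2}(t)\gtrsim 1$ for all cases, this contribution is absorbed.

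\textbf{Main obstacle.} The only delicate step is the one-dimensional bookkeeping in the first region, in particular the borderline exponents $a+1=0$ and $a+b+2=0$. There, logarithms from the angular integral must be shown not to compound into a $\log^2$.

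To handle this, I would first bound $T_{a+1}(1-r|z|)$ by $(1-r|z|)^{-\epsilon'}$-type expressions only when $a+b+2\neq0$. In the exact case $a+b+2=0$ we have $a+1=-b<1$, and if $a+1\ge0$ then $b\le0$. The $r$-integral of $(1-r)^b\log$ then remains finite unless $b=0$ and $a=-2$, where the direct computation gives a single $\log$.

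I will check this subcase explicitly: with $a=-2$, $b=0$, the angular integral equals $2\pi/(1-r^2|z|^2)$, and $\int_0^1 dr/(1-r|z|)\sim 1-\log(1-|z|)$.
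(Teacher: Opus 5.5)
Your argument is correct and is essentially the paper's proof. The shared steps are polar coordinates, the angular bound $\int_0^{2\pi}|1-le^{i\theta}|^a\,d\theta\lesssim T_{a+1}(1-l)$ with $l=r|z|$, the trivial bound $\int_0^{1/2}r^{d+1}\,dr\lesssim 1$ near the origin (using $d>-2$), and, for $a<-1$, a split of the radial integral at $r=|z|$ giving $\int_{1/2}^{|z|}(1-r)^{a+b+1}\,dr+(1-|z|)^{a+1}\int_{|z|}^1(1-r)^b\,dr\lesssim T_{a+b+2}(1-|z|)$.

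Your ``main obstacle'' paragraph is unnecessary and contains a slip. The relation $a+b+2=0$ gives $a+1=-(b+1)<0$, not $a+1=-b$. So the borderline cases $a+1=0$ and $a+b+2=0$ can never occur together (that would force $b=-1$), and no $\log^2$ can arise.
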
	
\begin{proof}
	
	We have 
	$$ I_{a,b,0,d}(z) = \int_{0}^1 \int_0^{2\pi}|1-r|z|e^{i\theta}|^a (1-r^2)^br^{d+1} d\theta dr.$$
	Let $l=r|z|$, then by the change of variable $s=1+\cos\theta$,
	\begin{align*}
		\int_0^{2\pi} |1-le^{i\theta}|^a d\theta &
	 = \int_{0}^{2} \frac{((1-l)^2+2ls)^{a/2}}{\sqrt{s(2-s)}}ds 
	 \lesssim \int_{0}^{2} \frac{\max\{(1-l)^2,s\}^{a/2}}{\sqrt{s(2-s)}}ds \\
	 &\lesssim \int_{0}^{(1-l)^2}  \frac{(1-l)^{a}}{\sqrt{s}}ds +  \int_{(1-l)^2}^{2} \frac{s^{a/2}}{\sqrt{s(2-s)}}ds 
	 \lesssim T_{a+1}(1-l).
	\end{align*}
	If $a\ge -1 $ then 
	$$I_{a,b,0,d}(z)  \lesssim \int_{0}^1 [1 -\ln(1-r|z|)] (1-r)^b r^{d+1} dr \lesssim 1 \lesssim T_{a+b+2}(1-|z|).$$	
	If $a< -1 $ then by 
	\begin{align*}
		I_{a,b,0,d}(z) &\lesssim \int_{0}^1  (1-r|z|)^{a+1} (1-r)^b r^{d+1} dr \\
		&\lesssim \int_{0}^{1/2}  r^{d+1} dr +  \int_{1/2}^1 (1-r|z|)^{a+1} (1-r)^b dr\\
		& \lesssim 1+ \int_{0}^{|z|} (1-r)^{a+b+1}  dr+ (1-|z|)^{a+1}\int_{|z|}^{1}   (1-r)^b  dr \\
		& \lesssim T_{a+b+2}(1-|z|) .
	\end{align*}
\end{proof}

\begin{proof}[Proof of Lemma \ref{lm:3_app}] We split the domain of integration into two sets, $\D=D_1\cup D_2$ where $D_1=\{w\in \D: |w-z|\le \frac{1-|z|}{2}\}$ and $D_2=\{w\in \D: |w-z|\ge \frac{1-|z|}{2}\}$.
 
 In $D_1$, we have  $1-|w| \sim 1-|z|$. Writing $1-z\w = 1-|z|^2+z(\w-\z)$ we see that $|1-z\w|
 \sim 1-|z|$.  Thus, the integral on this set is bounded by
\begin{align*}
	(1-|z|)^{a+b}\int_{D_1} |w-z|^c|w|^d \,dA(w).
\end{align*}
If $|z|>3/4$ then $|w|\sim 1 $ in $D_1$, hence 
$$(1-|z|)^{a+b}\int_{D_1} |w-z|^c|w|^d \,dA(w) \lesssim (1-|z|)^{a+b}\int_{D_1} |w-z|^c \,dA(w) \lesssim (1-|z|)^{a+b+c+2}.$$
If $|z|\le 3/4$ then
\begin{align*}
	& (1-|z|)^{a+b}\int_{D_1} |w-z|^c|w|^d \,dA(w)  \lesssim \int_{\D} |w-z|^c|w|^d \,dA(w)\\
	& \lesssim |z|^c \int_{\{|w|\le |z|/4\}} |w|^d \,dA(w) 
	\quad +|z|^d  \int_{\{|w-z|\le |z|/4\}} |w-z|^c \,dA(w)  \\
	&\quad + \int_{\{|w|,|w-z|\ge |z|/4\}} |w|^{c+d} \,dA(w) \\
	& \lesssim T_{c+d+2}(|z|).
\end{align*}

On $\partial D_2= \{ |w-z| = \frac{1-|z|}{2}\} \cup \partial \D$ we have  $|1-z\bar w| \sim |w-z|$ with implicit constants independent of $z$. Thus, by maximum principle, $|1-z\bar w| \sim |w-z|$ on $D_2$. Therefore,
$$\int_{D_2} |1-z\bar w|^a(1-|w|^2)^b |w-z|^c |w|^d \,dA(w)\lesssim I_{a+c,b,0,d}(z) \lesssim T_{a+b+c+2}(1-|z|), $$
by the previous lemma.
\end{proof}

The next lemma is a version of Schur's Lemma. We will use the notation
\[\|K(x,y)\|_{\Lnorm{y}{p}\Lnorm{x}{q}} = \|\|K(x,y)\|_{\Lnorm{x}{q}}\|_{\Lnorm{y}{p}}.\]

\begin{lemma}\label{schur}
	Let $T$ be defined by $Tf(x) = \int K(x,y)f(y) dy$. Suppose that $|K(x,y)| = K_1(x,y) K_2(x,y)$. Then for $1 \le p \le q \le \infty$, 
	\[ \|T\|_{\Lnorm{}{p} \to \Lnorm{}{q}} \le \|K_1\|_{\Lnorm{x}{\infty}\Lnorm{y}{p'}} \|K_2\|_{\Lnorm{y}{\infty}\Lnorm{x}{q}}. \]

\end{lemma}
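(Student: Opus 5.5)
We prove the lemma by a two-step Hölder argument, in the spirit of the classical Schur test. Fix $f\in L^p$ and a point $x$. Splitting $|K|=K_1K_2$, Hölder's inequality in $y$ with exponents $p'$ and $p$ gives
\[
|Tf(x)| \le \int K_1(x,y)K_2(x,y)|f(y)|\,dy \le \|K_1(x,\cdot)\|_{\Lnorm{y}{p'}}\Bigl(\int K_2(x,y)^p|f(y)|^p\,dy\Bigr)^{1/p}.
\]
The first factor is at most $A:=\|K_1\|_{\Lnorm{x}{\infty}\Lnorm{y}{p'}}$, uniformly in $x$. Hence
\[
|Tf(x)|\le A\,\Bigl(\int K_2(x,y)^p|f(y)|^p\,dy\Bigr)^{1/p}.
\]

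Next we take the $L^q$ norm in $x$. Set $r=q/p\ge 1$, which is where the hypothesis $p\le q$ enters. Then
\[
\|Tf\|_{\Lnorm{}{q}}\le A\,\Bigl\|\int K_2(\cdot,y)^p|f(y)|^p\,dy\Bigr\|_{\Lnorm{x}{r}}^{1/p}.
\]
Minkowski's integral inequality applies because $r\ge1$, and it bounds the inner norm by
\[
\int \|K_2(\cdot,y)^p\|_{\Lnorm{x}{r}}|f(y)|^p\,dy=\int \|K_2(\cdot,y)\|_{\Lnorm{x}{q}}^p|f(y)|^p\,dy.
\]
This is at most $B^p\|f\|_{\Lnorm{}{p}}^p$, where $B:=\|K_2\|_{\Lnorm{y}{\infty}\Lnorm{x}{q}}$. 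Taking $p$-th roots gives $\|Tf\|_{\Lnorm{}{q}}\le AB\|f\|_{\Lnorm{}{p}}$, which is the claimed bound.

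The endpoint cases need separate handling. When $p=1$ we have $p'=\infty$, and the first Hölder step becomes the trivial bound $\sup_y K_1(x,y)$; the rest of the argument is unchanged. When $q=\infty$, which forces $p\le\infty$, we replace the Minkowski step by taking a supremum in $x$ directly: $\sup_x\int K_2^p|f|^p\le B^p\|f\|_p^p$. The case $p=q=\infty$ also fits this route.

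The only genuine subtlety is measurability and the justification of Minkowski's inequality. Both hold for nonnegative measurable kernels by Tonelli's theorem, so the main care is keeping track of which variable each norm is taken in.
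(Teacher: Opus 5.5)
Your proof is correct and is essentially the paper's argument: the same Hölder split in $y$ with exponents $(p',p)$ (bounding the $K_1$ factor by its $L^\infty_x L^{p'}_y$ norm), followed by Minkowski's inequality (which is where $p\le q$ enters) to interchange the $L^p_y$ and $L^q_x$ norms on the $K_2 f$ factor. The only cosmetic difference is that the paper runs these steps inside a duality pairing $\langle Tf,g\rangle$ with $g\in L^{q'}$ and an extra Hölder step in $x$, whereas you estimate $|Tf(x)|$ pointwise and take the $L^q$ norm directly.
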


\begin{proof}
	Let $f \in \Lnorm{}{p}$ and $g \in\Lnorm{}{q'}$. By Fubini's theorem, we have
	\begin{align*}
		|\langle Tf, g \rangle| &  = \left|\iint K_1(x,y) g(x) K_2(x,y) f(y)\, dy \, dx \right|\\
		& \le \left\| \| K_1(x,y) g(x) \|_{\Lnorm{y}{p'}}  \| K_2(x,y) f(y) \|_{\Lnorm{y}{p}} \right\|_{L^1_x} \\
		& \le \| K_1(x,y) g(x) \|_{\Lnorm{x}{q'} \Lnorm{y}{p'}} \| K_2(x,y) f(y) \|_{\Lnorm{x}{q} \Lnorm{y}{p}}\\
	    & \le \| K_1(x,y) g(x) \|_{\Lnorm{x}{q'}\Lnorm{y}{p'}} \| K_2(x,y) f(y) \|_{\Lnorm{y}{p} \Lnorm{x}{q}} \qquad\text{(by Minkowski)} \\
		& \le \| K_1(x,y) \|_{\Lnorm{x}{\infty} \Lnorm{y}{p'}} \|g\|_{\Lnorm{}{q'}} \| K_2(x,y) \|_{\Lnorm{y}{\infty} \Lnorm{x}{q}} \|f\|_{\Lnorm{}{p}}.
	\end{align*}
	Taking the supremum over all $f$ and $g$ with norm 1 yields the desired result.
\end{proof}

\begin{remark}
	The original version of Schur's Lemma is recovered by setting $p=q$ and choosing $K_1 = |K|^{1/p'}$ and $K_2 = |K|^{1/p}$. The more general version in \cite{KhLiTh17} is obtained by decomposing the kernel 
	$$K(x,y)\psi(y) = \frac{K^\eta h_1(y)}{g(x)} \frac{K^{1-\eta} g(x)\psi(y)}{h_1(y)}.$$
\end{remark}
In choosing the various parameters, we also need the following elementary lemma whose proof is omitted.
\begin{lemma} \label{element}
	There exists $x_1\in (\alpha_1,\beta_1)$, $x_2\in (\alpha_2,\beta_2)$ so that $x_2-x_1\in  [\alpha_3,\beta_3]$ if and only if these intervals are nonempty and $\alpha_1+\alpha_3<\beta_2$, $\alpha_2<\beta_1 +\beta_3$.
	
	In particular, nonempty intervals $(\alpha_1,\beta_1)$ and $(\alpha_2,\beta_2)$ intersect if and only if $\alpha_1<\beta_2$, $\alpha_2<\beta_1$.
\end{lemma}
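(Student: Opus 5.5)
This is an elementary statement about open and closed intervals, so the plan is to reduce it to the special case and then check that case directly. Write $I_j=(\alpha_j,\beta_j)$ for $j=1,2$ and $I_3=[\alpha_3,\beta_3]$.

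\emph{Reduction.} The condition ``there exist $x_1\in I_1$, $x_2\in I_2$ with $x_2-x_1\in I_3$'' says exactly that the difference set
\[\{x_2-x_1: x_1\in I_1,\ x_2\in I_2\}=(\alpha_2-\beta_1,\ \beta_2-\alpha_1)\]
meets $[\alpha_3,\beta_3]$. The identity for the difference set requires both intervals to be nonempty, and for empty intervals the existence claim fails trivially. This explains the nonemptiness hypothesis on both sides of the equivalence.

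\emph{Special case.} An open interval $(a,b)$ and a closed interval $[c,d]$, both nonempty, intersect if and only if $a<d$ and $c<b$. For necessity, a point $x$ in the intersection satisfies $a<x\le d$ and $c\le x<b$. For sufficiency, take $x=\max\{a,c\}$ nudged slightly to the right when $x=a$; more cleanly, choose any point of $(\max\{a,c\},\min\{b,d\})$ if that interval is nonempty. Otherwise $\max\{a,c\}=\min\{b,d\}$, and this forces the common value to be $c=b$ or $a=d$, both excluded by the strict inequalities, except in the degenerate case $c=d$. In that case $c$ itself lies in $(a,b)$ by the two inequalities.

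\emph{Main case.} Apply the special case with $a=\alpha_2-\beta_1$, $b=\beta_2-\alpha_1$, $c=\alpha_3$, $d=\beta_3$. The two conditions become $\alpha_3<\beta_2-\alpha_1$ and $\alpha_2-\beta_1<\beta_3$, which are precisely $\alpha_1+\alpha_3<\beta_2$ and $\alpha_2<\beta_1+\beta_3$.

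\emph{The ``in particular'' statement.} Take $\alpha_3=\beta_3=0$. Then the interval $[0,0]$ is nonempty, and the conditions reduce to $\alpha_1<\beta_2$ and $\alpha_2<\beta_1$, i.e.\ $x_1=x_2$ lies in both intervals.

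The only step needing care is the boundary bookkeeping in the special case, where one interval is open and the other closed; that is where strict inequalities must be tracked.
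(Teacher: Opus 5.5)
The paper omits the proof of this lemma as elementary, so there is no argument to compare against. Your proof is correct and is the natural one. For nonempty open intervals, the differences $x_2-x_1$ fill exactly $(\alpha_2-\beta_1,\beta_2-\alpha_1)$. Your two strict inequalities are precisely the condition for this open interval to meet $[\alpha_3,\beta_3]$.

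The one step to tidy is the degenerate case in the sufficiency part of your special case. The sentence about the common value being ``$c=b$ or $a=d$'' is garbled; it omits the coincidence $a=b$ and does not justify why $\max\{a,c\}>\min\{b,d\}$ cannot occur. Say instead that $a<b$, $a<d$, $c<b$, $c\le d$ give $\max\{a,c\}\le\min\{b,d\}$ directly. Equality is possible only when $c=d$, since any other coincidence contradicts a strict inequality, and in that case $x=c$ lies in $(a,b)$.

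Also, the difference-set identity you quote is itself the two-open-interval case. It applies to $(\alpha_1,\beta_1)$ and $(\alpha_2-t,\beta_2-t)$ for each fixed $t$, and the same $\max/\min$ argument settles it. So deriving the ``in particular'' statement from the main one involves no circularity.
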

We are now ready to prove the main result of this section.
\begin{theorem} \label{keybound} Assume that $1< p\le q< \infty$. Then $S_{a,b,c}: L^p(\D,|z|^\alpha) \to L^q(\D,|z|^\beta)$ is bounded if the following conditions hold
	\begin{itemize}
		\item $\alpha<2p-2$ and ${\beta}>-2$,
		\item $b>-\frac{1}{p'}$
		\item $c>-\frac{2}{p'}-\frac{2}{q}$, and $c\ge-\frac{2}{p'}-\frac{2}{q}+\frac{\alpha}{p}-\frac{\beta}{q}$
		\item $a+b+c\ge-\frac{2}{p'}-\frac{2}{q}$.
	\end{itemize}
	
\end{theorem}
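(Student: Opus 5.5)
The plan is to reduce the weighted statement to an unweighted one and apply the factorized Schur test, Lemma \ref{schur}, with all exponents chosen by Lemma \ref{element}. Boundedness of $S_{a,b,c}: L^p(\D,|z|^\alpha)\to L^q(\D,|z|^\beta)$ is equivalent to boundedness of the operator with kernel
\[ K(z,w)=|z|^{\beta/q}\,|1-z\bar w|^a(1-|w|^2)^b|z-w|^c\,|w|^{-\alpha/p} \]
from $L^p(\D,dA)$ to $L^q(\D,dA)$. This follows by substituting $f=|w|^{-\alpha/p}h$ and multiplying the output by $|z|^{\beta/q}$.

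I will split $K=K_1K_2$ with
\[ K_1=|1-z\bar w|^{a_1}(1-|w|^2)^{b_1}|z-w|^{c_1}|w|^{d_1}|z|^{e_1}(1-|z|^2)^{-t}, \]
\[ K_2=|1-z\bar w|^{a_2}(1-|w|^2)^{b_2}|z-w|^{c_2}|w|^{d_2}|z|^{e_2}(1-|z|^2)^{t}. \]
The exponents satisfy $a_1+a_2=a$, $b_1+b_2=b$, $c_1+c_2=c$, $d_1+d_2=-\alpha/p$ and $e_1+e_2=\beta/q$. The free parameter $t$ is an extra power that cancels in the product.

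\textbf{First Schur quantity.} We need $\sup_z \int_\D K_1^{p'}\,dA(w)<\infty$. By Lemma \ref{lm:3_app}, this holds once the following are true:
\begin{itemize}
\item $b_1p'>-1$, $c_1p'>-2$ and $d_1p'>-2$;
\item $(1-|z|)^{-tp'}T_{p'(a_1+b_1+c_1)+2}(1-|z|)\lesssim 1$;
\item $|z|^{e_1p'}T_{p'(c_1+d_1)+2}(|z|)\lesssim 1$.
\end{itemize}

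\textbf{Second Schur quantity.} We need $\sup_w\int_\D K_2^{q}\,dA(z)<\infty$. I will apply Lemma \ref{lm:3_app} with the roles of $z$ and $w$ swapped, using $|1-z\bar w|=|1-w\bar z|$, with $b$-parameter $tq$ and $d$-parameter $e_2q$. This requires $tq>-1$, $c_2q>-2$ and $e_2q>-2$. It also requires
\[ (1-|w|)^{b_2q}T_{q(a_2+c_2+t)+2}(1-|w|)\lesssim 1 \quad\text{and}\quad |w|^{d_2q}T_{q(c_2+e_2)+2}(|w|)\lesssim 1. \]

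The key point is the endpoint case $a+b+c=-\frac2{p'}-\frac2q$, where one cannot ask for positive $T$-exponents on both sides. This is why the factor $(1-|w|^2)^{b_2}$ sits in $K_2$: it is constant in the $z$-integration. A negative power $T$ coming from $|1-z\bar w|^{a_2}$ can therefore be compensated exactly, with equality $b_2q+q(a_2+c_2+t)+2=0$ allowed. On the $K_1$ side I will make the exponent $p'(a_1+b_1+c_1)+2$ strictly negative and cancel it exactly with $(1-|z|)^{-t}$, setting $-tp'+p'(a_1+b_1+c_1)+2=0$. Adding the two equalities reproduces exactly $a+b+c=-\frac2{p'}-\frac2q$. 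In the strict-inequality case there is room to spare.

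The same mechanism works near the origin. The $|w|$-powers and $|z|$-powers play the roles of $b$ and $t$, which accounts for the non-strict condition $c\ge-\frac2{p'}-\frac2q+\frac\alpha p-\frac\beta q$. The strict conditions $\alpha<2p-2$, $\beta>-2$, $b>-1/p'$ and $c>-\frac2{p'}-\frac2q$ come from the local integrability requirements $d_1p'>-2$, $e_2q>-2$, $b_1p'>-1$ (together with $tq>-1$), and $c_1p'>-2$, $c_2q>-2$, respectively.

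The main obstacle is the bookkeeping: showing that all these constraints can be met simultaneously. I will handle it pair by pair: $(c_1,c_2)$, then $(b_1,b_2)$ with $t$, then $(d_1,d_2)$ with $(e_1,e_2)$. Each pair amounts to picking $x_1,x_2$ in open intervals with $x_2-x_1$ in a prescribed closed interval, so Lemma \ref{element} turns solvability into exactly the listed inequalities. The hardest pair is the $(d,e)$ pair, where the conditions couple with $c_1,c_2$ through the $T_{c+d+2}$ terms. There I would first fix $c_1$ and $c_2$ near the midpoint of their admissible ranges, and then verify the two inequalities of Lemma \ref{element}.
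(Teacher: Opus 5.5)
Your plan is the paper's proof. It uses the same reduction to the kernel $|z|^{\beta/q}s_{a,b,c}(z,w)|w|^{-\alpha/p}$ and the same factorization: your $t$ and $e_2$ are the paper's $b_2$ and $d_2$, and $a_1,b_1,c_1,d_1$ agree. It also combines Lemma \ref{schur} with Lemma \ref{lm:3_app}, and uses Lemma \ref{element} to choose the exponents. Your analysis of the endpoint $a+b+c=-\frac{2}{p'}-\frac{2}{q}$ is right. However, the one concrete choice you propose for the step you call hardest fails: $c_1$ cannot in general be taken near the midpoint of its integrability range $(-\frac{2}{p'},c+\frac{2}{q})$.

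For fixed $c_1$, the $(d,e)$ system is an instance of Lemma \ref{element} with $x_1=d_1\in(-\frac{2}{p'},-\frac{\alpha}{p})$, $x_2=e_2\in(-\frac{2}{q},\frac{\beta}{q})$ and $x_2-x_1\in[\frac{\alpha}{p}-c+c_1-\frac{2}{q},\,c_1+\frac{\beta}{q}+\frac{2}{p'}]$. It is therefore solvable if and only if $c_1\in(-\gamma,c+\gamma)$, where $\gamma=\frac{2}{p'}+\frac{2}{q}-\frac{\alpha}{p}+\frac{\beta}{q}>0$. This interval is centred at $\frac{c}{2}$, not at the midpoint $\frac{c}{2}+\frac{1}{q}-\frac{1}{p'}$, and its length $c+2\gamma$ can be tiny.

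For example, $p=q=4$, $\alpha=5.6$, $\beta=-1.6$, $c=-0.2$ satisfy all hypotheses, with equality in the second condition on $c$. Then $c_1$ must lie in $(-0.2,0)$, whereas your midpoint is $-0.6$. Indeed, with $c_1=-0.6$ the requirement $d_2\ge 0$ forces $d_1\le -1.4$. Hence $e_1\ge -(c_1+d_1)-\frac{2}{p'}\ge 0.5$ and $e_2=\frac{\beta}{q}-e_1\le -0.9<-\frac{2}{q}$, violating integrability.

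The repair is what the paper does: pick $c_1\in(-\gamma,c+\gamma)\cap(-\frac{2}{p'},c+\frac{2}{q})$. This set is nonempty by Lemma \ref{element}, since $c\ge-\gamma$, $\gamma>0$ and $c>-\frac{2}{p'}-\frac{2}{q}$. After that, the $(b_1,t)$ system only asks for $a_1+c_1\in(-b-\frac{1}{q}-\frac{2}{p'},\,a+b+c+\frac{2}{q}+\frac{1}{p'})$. This interval is nonempty because $a+2b+c>-\frac{3}{p'}-\frac{3}{q}$. Since $a_1$ is free, the rest of your ordering goes through.
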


\begin{proof}
	We first note that $S_{a,b,c}: L^p(\D,|z|^\alpha) \to L^q(\D,|z|^\beta)$ is bounded if and only if the operator with kernel
	$$K(z,w)  = s_{a,b,c}(z,w) |z|^{\beta/q} |w|^{-\alpha/p}$$
	is bounded from  $L^p(\D)$  to $L^q(\D)$.
	Let 
	$$K_1(z,w)=\frac{|1-z\bar w|^{a_1} (1-|w|^2)^{b_1}|w-z|^{c_1}|w|^{d_1}} {(1-|z|^2)^{b_2}|z|^{d_2-\beta/q}},$$
	and 
	$$K_2(z,w)=\frac{|1-z\bar w|^{a-a_1} (1-|z|^2)^{b_2}|w-z|^{c-c_1}|z|^{d_2}} {(1-|w|^2)^{b_1-b}|w|^{d_1+\alpha/p}}.$$
	Then $K=K_1 K_2$, so by Lemma \ref{schur}, it suffices to find  constants $a_1, b_1, b_2, c_1, d_1, d_2$ such that $$\|K_1\|_{L^{\smash{\infty}}_z L^{\smash{p'}}_w} + \|K_2\|_{L^{\smash{\infty}}_w L^{\smash{q}}_z } <\infty.$$
	
	By Lemma \ref{lm:3_app}, if $b_1>-1/p'$, and $c_1,d_1>-2/p'$ then
	$$\|K_1\|_{L^\infty_z L^{p'}_w} \lesssim \sup_{z\in\D} \frac{ T_{(a_1+b_1+c_1)p'+2}^{1/p'}(1-|z|)\cdot T_{(c_1+d_1) p'+2}^{1/p'}(|z|)  }{(1-|z|^2)^{b_2}|z|^{d_2-\beta/q}}.$$
	The right-hand side is finite if 
	\begin{equation}\label{cond1}
		\begin{cases}
			b_2 <0 \text{ and } b_2 \le  a_1+b_1+c_1 +\frac{2}{p'}\\
			d_2-\frac{\beta}{q}<0 \text{ and } d_2- \frac{\beta}{q} \le c_1+d_1+\frac{2}{p'} 
		\end{cases}
	\end{equation}
	
	Similarly, if  $b_2>-1/q$, and $c-c_1,d_2>-2/q$ then
	$$\|K_2\|_{L^\infty_w L^{q}_z} \lesssim \sup_{w\in\D} \frac{ T_{(a-a_1+b_2+c-c_1)q+2}^{1/q}(1-|w|)\cdot T_{(c-c_1+d_2)q +2}^{1/q}(|w|) }{(1-|w|^2)^{b_1-b}|w|^{d_1+\alpha/p}},$$
	which is finite if
	\begin{equation}\label{cond2}
		\begin{cases}
			b_1 - b <0 \text{ and }  b_1 -b \le  a-a_1+b_2+c-c_1 +\frac{2}{q}\\
			 d_1 + \frac{\alpha}{p}<0 \text{ and } d_1+ \frac{\alpha}{p} \le c-c_1+d_2+\frac{2}{q} 
		\end{cases}
	\end{equation}		
	Combining \eqref{cond1} and \eqref{cond2}, we need to find  $a_1\in \R$,  $c_1 \in (-\frac{2}{p'},c+\frac{2}{q})$ so that there exists
	\begin{enumerate}
		\item $d_1\in (-\frac{2}{p'},-\frac{\alpha}{p})$,  $d_2\in (-\frac{2}{q},\frac{\beta}{q})$ with $d_2 -d_1 \in [-c+c_1-\frac{2}{q}+\frac{\alpha}{p}, c_1 + \frac{\beta}{q}  +\frac{2}{p'} ]$. \label{aaa}
		\item $b_1\in(-\frac{1}{p'},b)$, $b_2 \in (-\frac{1}{q},0)$ with $b_2 -b_1 \in [-a-b-c+a_1+c_1 -\frac{2}{q}, a_1+c_1+\frac{2}{p'}]$.
	\end{enumerate}
	Note that by our hypothesis, all intervals above are non-empty. Thus, be Lemma \ref{element}, there exist $d_1,d_2$ satisfying (1) if and only if
	       	$$c_1\in (-\frac{2}{p'}-\frac{2}{q}+\frac{\alpha}{p}- \frac{\beta}{q} , c+\frac{2}{p'}+\frac{2}{q}-\frac{\alpha}{p} +\frac{\beta}{q}) .$$
	By our assumptions, this interval is nonempty and intersects $(-\frac{2}{p'},c+\frac{2}{q})$, by Lemma \ref{element}. Thus, $c_1$ can be chosen and thence $d_1$ and $d_2$.
	
	By  Lemma \ref{element} again, there exists $b_1,b_2$ satisfying (2) if and only if
		$$a_1\in (-c_1-b- \frac{1}{q} -\frac{2}{p'}, a+b+c-c_1+\frac{2}{q}+\frac{1}{p'}).$$
	The interval is non empty since by our assumptions, 
	$$a+2b+c > -\frac{3}{p'} -\frac{3}{q}.$$
	Thus, $a_1$ can be chosen and thence $b_1$ and $b_2$. The proof is complete.
\end{proof}
	\bibliographystyle{alpha}
\bibliography{mybib-Khanh-Tu}
\end{document}